\documentclass[11pt]{article}
\usepackage{amssymb}
\usepackage{amsthm}
\usepackage{amsmath}
\usepackage{graphicx}

\textwidth=16cm \textheight=23cm
\parskip=\medskipamount
\parindent=0pt
\topmargin=-1cm \oddsidemargin=0cm

\def\fpd#1#2{{\displaystyle\frac{\partial #1}{\partial #2}}}
\def\spd#1#2#3{{\displaystyle\frac{\partial^2 #1}
{\partial #2\partial #3}}}
\def\cinfty#1{C^{\scriptscriptstyle\infty}(#1)}
\def\onehalf{{\textstyle\frac12}}
\def\vectorfields#1{{\cal X}(#1)}
\newcommand{\R}{\mathbb{R}}
\def\TMO{T^\circ\!M}
\def\L{{\mathcal L}}

\theoremstyle{plain}
\newtheorem{thm}{Theorem}
\newtheorem{prop}{Proposition}
\newtheorem{definition}{Definition}

\author{T.\ Mestdag\\Department of Mathematics, Ghent University\\
Krijgslaan 281, B--9000 Gent, Belgium\\tom.mestdag@ugent.be}

\title{Finsler geodesics of Lagrangian systems through Routh reduction}

\begin{document}

\date{}
\maketitle

\begin{quote}
{\small {\bf Abstract.} We make use of a symmetry reduction technique called Routh reduction to show that the solutions of the Euler-Lagrange equations of a strongly convex autonomous Lagrangian which lie on a specific energy level can be thought of as geodesics of an associated Finsler function.

{\bf Key words.} Lagrangian system, Finsler function, Routh reduction, spray, geodesic.

{\bf MSC.} 53C60, 70H03, 70H33}
\end{quote}

\section{Introduction}

In this paper we generalize a theorem from Lagrangian mechanics that is sometimes attributed to Jacobi or referred to as `Maupertuis' Principle'. It may be found in one version or another in a number of standard text books.  Assume that $V(x)$ is a function which is bounded from above, and assume that $g_{ij}(x)$ is a Riemannian metric.
Lagrangians of the type $L= \onehalf g_{ij}{\dot x}^i {\dot x}^j -V$ are often referred to as `simple' Lagrangians. Their energy is a conserved quantity given by the function $E_L = \onehalf g_{ij}{\dot x}^i {\dot x}^j +V$.

\begin{thm}[see e.g.\ \cite{AM,Arnold}] \label{jacobi} Assume $e>V(x)$ for all $x$. The solutions of the Euler-Lagrange equations of the Lagrangian $L= \onehalf g_{ij}{\dot x}^i {\dot x}^j -V$ which have constant energy $E_L=e$ are geodesics of the 'Jacobi metric'
\[
{\tilde g}_{ij} = 2(e-V)g_{ij},
\]
after a reparametrization.
\end{thm}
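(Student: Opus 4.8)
The plan is to combine the variational (least-action) characterization of geodesics with the on-shell energy identity, so that the Jacobi length of an energy-$e$ solution is recognized as an abbreviated action which the trajectory already extremizes. First I would record the basic facts about $\tilde g$: writing $f:=2(e-V)$, the hypothesis $e>V$ guarantees $f>0$, so $\tilde g_{ij}=fg_{ij}$ is a genuine Riemannian metric. I would then recall the standard fact that the (unparametrized) geodesics of $\tilde g$ are exactly the critical curves of the length functional $\int\sqrt{\tilde g_{ij}\dot x^i\dot x^j}\,dt$ with fixed endpoints, this functional being invariant under orientation-preserving reparametrization.

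The computational heart is an on-shell identity. Along any solution with $E_L=e$ one has $\onehalf g_{ij}\dot x^i\dot x^j+V=e$, hence
\[
g_{ij}\dot x^i\dot x^j = 2(e-V)=f.
\]
Substituting this into the Jacobi line element gives $\sqrt{\tilde g_{ij}\dot x^i\dot x^j}=\sqrt{f\cdot f}=f$, so the Jacobi arclength $s$ grows along the trajectory according to $ds/dt=2(e-V)$; this is the reparametrization announced in the statement. Integrating, the $\tilde g$-length of the trajectory equals $\int f\,dt=\int g_{ij}\dot x^i\dot x^j\,dt=\int(L+E_L)\,dt$, which on the energy level is precisely the abbreviated (Maupertuis) action $\int p_i\,dx^i$ with $p_i=g_{ij}\dot x^j=\partial L/\partial\dot x^i$.

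With this identification the result follows from Maupertuis' principle: among curves joining two fixed points and constrained to the level $E_L=e$ (with the traversal time left free), the physical trajectories are exactly the critical points of $\int p_i\,dx^i$. Since this constrained functional coincides with the $\tilde g$-length, its critical points are the $\tilde g$-geodesics, and reparametrizing by $s$ with $ds/dt=f$ turns each trajectory into a genuine (constant-$\tilde g$-speed) geodesic. I expect the main obstacle to be the rigorous matching of the two variational problems — the constrained, free-time Maupertuis principle versus the unconstrained length minimization — since the energy constraint must be shown to be automatically compatible with, rather than an extra restriction on, the length variation. A self-contained alternative that sidesteps this point is a direct verification: write the Euler-Lagrange equations of $L$ as $\ddot x^i+\Gamma^i_{jk}\dot x^j\dot x^k=-g^{il}\partial_l V$, insert the conformal Christoffel symbols $\tilde\Gamma^i_{jk}=\Gamma^i_{jk}+\tfrac{1}{2f}(\delta^i_j\partial_k f+\delta^i_k\partial_j f-g_{jk}g^{il}\partial_l f)$ into the $\tilde g$-geodesic equation, and check that the substitution $ds/dt=f$ together with the on-shell identity $g_{ij}\dot x^i\dot x^j=f$ reduces one equation to the other; here the obstacle is purely the bookkeeping of the conformal terms against the second-derivative contributions produced by the reparametrization.
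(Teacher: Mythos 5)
Your proposal is correct, and it follows the classical Maupertuis route of the very textbooks the theorem is attributed to --- which is genuinely different from how this paper obtains the result. The paper never proves Theorem~\ref{jacobi} directly: it recovers it as the special case $\beta_i=0$ of Example~2, by running the general machinery of Theorem~\ref{mainthm}. One homogenizes $L$ to the 1-homogeneous function $F(x^0,x^i,y^0,y^i)=y^0L(x^i,y^i/y^0)$ on $T^\circ\!(\R\times Q)$, observes that the time coordinate $x^0$ is cyclic so that the momentum $\partial F/\partial y^0=-E_L$ is conserved, solves $E_L(x,y/\iota^0_e)=e$ to get $\iota^0_e=\sqrt{g_{ij}y^iy^j/(2(e-V))}$, and computes the Routhian $F_e=\iota^0_e\bigl(L(x,y/\iota^0_e)+e\bigr)=\sqrt{2(e-V)g_{ij}y^iy^j}$, whose energy $E_e=\onehalf\tilde g_{ij}y^iy^j$ is exactly the Jacobi metric; Propositions~\ref{propthree}, \ref{propsix} and \ref{proptwo} (with Proposition~\ref{multprop} supplying the Finsler adjustment) then guarantee that the restriction of $\Gamma_L$ to $E_L=e$ is a spray in the projective class of $F_e$. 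Your on-shell identity $g_{ij}\dot x^i\dot x^j=2(e-V)$ and the reparametrization $ds/dt=2(e-V)$ match the paper's bookkeeping (the relation $\iota^0_e=1\Leftrightarrow E_L=e$ plays the same role there), and you correctly isolate the one delicate point of the Maupertuis route --- reconciling the constrained free-time variation with the unconstrained, parametrization-invariant length functional --- which is standardly resolved by noting that any comparison curve may be reparametrized onto the energy level without changing its Jacobi length. Your fallback direct computation also checks out: multiplying the $\tilde g$-geodesic equation by $f^2$, the two $\dot f\,\dot x^i/f$ terms cancel and the remaining conformal term $-\frac{1}{2f}g_{jk}\dot x^j\dot x^k\,g^{il}\partial_l f$ becomes $g^{il}\partial_l V$ on shell, reproducing the Euler-Lagrange equations (energy conservation along solutions ensures the on-shell identity persists in time). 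As for what each approach buys: yours is elementary and self-contained but leans on the quadratic structure of $L$ (the abbreviated action being a length is special to simple Lagrangians); the paper's Routh-reduction route costs more setup but generalizes at once to arbitrary strongly convex Lagrangians, produces genuinely Finslerian (e.g.\ Randers) Jacobi functions when magnetic terms are present, and keeps explicit control of parametrization through the projective class of sprays rather than through an ad hoc change of parameter.
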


The above theorem has turned out to be extremely useful in a number of applications, for example in the proof of certain properties about the existence of closed orbits on specific energy levels \cite{Arnold}, or in questions about stability of solutions \cite{Gonzalez}. According to \cite{Koiller} the  Jacobi metric is also an useful tool for studying mechanical systems with nonholonomic constraints.

Below (in Theorem~\ref{mainthm}) we discuss a new technique to obtain a generalization of Theorem~\ref{jacobi} in the context of arbitrary Lagrangians, not necessarily of simple type. The main goal of the paper is to show that one may apply a Lagrangian symmetry reduction method, known as Routh's procedure, for that purpose, together with a homogenization trick.

Routh reduction is a method that takes full advantage of the close relation between symmetries and conserved quantities.  Since Routh's original contribution for cyclic coordinates (see e.g.\ \cite{Routh}) it has been generalized to include Lagrangians that are invariant under a possibly non-Abelian Lie symmetry group in e.g.\ the papers \cite{CM,Edu,Scheurle}. To keep the paper self-contained we develop in Section~\ref{sectwo} a level of generalization that is appropriate for understanding the rest of the paper. The approach we will follow is somewhat different from the one that one may usually find in the literature, in that we focus on the case of a singular Lagrangian, and on the vector fields that define the dynamics (see e.g.\ Proposition~\ref{propthree}). In Section~\ref{secthree} we discuss some aspects of Routh reduction for 1-homogeneous Lagrangians. We show, among other, in Proposition~\ref{prop1} that the Routhian of a 1-homogeneous Lagrangian is again 1-homogeneous, but that the same conclusion does not hold for $k$-homogeneous Lagrangians with $k\geq 2$.

The homogenization method (discussed in e.g.\ \cite{SV}, and in Section~\ref{secfour}) we had mentioned before enables one, roughly speaking, to go back and forth between a time-dependent Lagrangian framework and a Finslerian one. We will show in Theorem~\ref{mainthm} that the role of the Jacobi metric for a non-simple autonomous Lagrangian is played by a Routh-reduced 1-homogeneous function on the tangent bundle which, under appropriate conditions, will be a Finsler function. In Section~\ref{secfive} and Section~\ref{secsix} we discuss some examples and we indicate a few directions for future work.

In the literature, one may find a huge number of generalizations of different aspects of Theorem~\ref{jacobi}. Here we mention only a few papers which seem related to ours. For example, the paper \cite{Yasuda} deals with the case of so-called `magnetic Lagrangian systems'. We will treat this case as an example in Section~\ref{secfive}. Some other works on a Finslerian generalization are the papers \cite{Cheng,Contreras,It}. However, their approach to the issue is clearly different from ours. They use a rather Hamiltonian framework on a cotangent bundle, and they make use of the inverse Legendre transformation of a 2-homogeneous Hamiltonian. Our approach, in contrast, remains throughout distinctly Lagrangian. We use a 1-homogeneous function on a tangent bundle and the Finsler functions we find are explicit. No reference needs to be made to an inverse Legendre transformation.

\section{Routh reduction} \label{sectwo}

Let $M$ be a differentiable manifold, with natural local coordinates $(x^\alpha,y^\alpha)$ on its tangent bundle $\tau_M: TM \to M$. In what follows, we will interpret a dynamical system on $M$ as a vector field. The solutions of the system are then identified with the integral curves of the vector field. For example, a system of autonomous ordinary second-order differential equations, in normal form given by $\ddot x^\alpha = f^\alpha(\dot x, x)$, can be represented by a {\em second-order vector field}, i.e.\ by a vector field $\Gamma$ on $TM$ such that $T\tau_M\circ\Gamma = id$. A second-order vector field is of the form
\begin{equation} \label{sode}
\Gamma = y^\alpha\fpd{}{x^\alpha} + f^\alpha(x,y)\fpd{}{y^\alpha}.
\end{equation}
We will often need to distinguish between regular and singular Lagrangian systems.
\begin{definition}
A Lagrangian $L\in\cinfty{TM}$ is regular if its Hessian by fibre coordinates $(g_{\alpha\beta}) = (\partial^2 L/\partial y^\alpha\partial y^\beta)$ defines a non-singular matrix everywhere. A Lagrangian $L$ is strongly convex if $(g_{\alpha\beta})$ is positive-definite everywhere.
\end{definition}
If the Lagrangian is not regular, we will call it singular. A vector field $\Gamma_L$ will be said to be a {\em Lagrangian vector field} for a  Lagrangian $L$ if it is second-order, and if it satisfies
\begin{equation}\label{ELeq}
\Gamma_L\left(\fpd{L}{y^\alpha}\right)-\fpd{L}{x^\alpha}=0,\quad \forall (x,y) \in TM.
\end{equation}
The base integral curves of $\Gamma_L$ are therefore solutions of the Euler-Lagrange equations for $L$. When $L$ is regular the vector field $\Gamma_L$ is uniquely determined by these conditions; when $L$ is singular, there may exist several Lagrangian vector fields $\Gamma_L$. We will often simply refer to the expressions (\ref{ELeq}) as Euler-Lagrange equations.

A base coordinate is said to be cyclic if the Lagrangian does not depend on it. We explain below how, through so-called Routh reduction, one may eliminate both the coordinates and the velocities corresponding to cyclic coordinates.

Let $L$ be a (possibly singular) Lagrangian function
on (the tangent manifold of) a manifold of the type $M= {\R}^n\times Q$, with coordinates $(x^\alpha)=(x^a,x^i)$. Suppose that the Lagrangian $L$ does not explicitly depend on the coordinates $x^a$ of $\R^n$ or, in other words, that it is invariant under the standard action of $\R^n$ on the first factor of  $T(\R^n\times Q) = \R^n\times \R^n \times TQ$. The invariant function $L$ can then also be interpreted as a function on the quotient $T(\R^n\times Q)/\R^n=\R^n\times TQ$. We will not use a different notation for the interpretation of $L$ as an invariant function on $T(\R^n\times Q)$, or as a function on $\R^n\times TQ$. Likewise, the Lagrangian vector fields $\Gamma_L$ are invariant under the action of $\R^n$, and they reduce to vector fields on $\R^n \times TQ$. We will also keep the same notation, and we write
\[
\Gamma_L = y^i\fpd{}{x^i} + f^a\fpd{}{y^a} + f^i \fpd{}{y^i},
\]
where the functions $f^a,f^i$ do not depend on the coordinates $x^a$.

From the Euler-Lagrange equations for the  coordinates $x^a$,
\[
\Gamma_L\left(\fpd{L}{y^a}\right)-\fpd{L}{x^a}=0,
\]
we see that the functions $\partial{L}/\partial{y^a}$ (interpreted as functions on $\R^n\times TQ$) are first integrals for each of the vector fields $\Gamma_L$. That is to say, we have  conservation laws of the type
\begin{equation} \label{levelset}
\fpd{L}{y^a}=\mu_a
\end{equation}
(with  $\mu_a$  constants) along the solutions of the Euler-Lagrange equations of $L$. The relation (\ref{levelset}) expresses conservation of momentum, in a generalized sense. We will always assume that we can solve relations (\ref{levelset}) for the variables $y^a$, say as $y^a = \iota^a_\mu(x^i,y^i)$. The condition for this locally to happen is that the Lagrangian is $\R^n$-regular.
\begin{definition}
A Lagrangian $L$ on $\R^n\times Q$ is $\R^n$-regular if  the matrix of functions $(\partial^2 L/ \partial y^a \partial y^b)$ is everywhere non-singular.
\end{definition}
Remark that a strongly convex Lagrangian is always $\R^n$-regular.

Under the assumption of $\R^n$-regularity, we may identify the level set of momentum $(\ref{levelset})$ with the tangent manifold $TQ$ (or a suitable open part of it) in the quotient manifold $\R^n\times TQ$. We will assume, from now on, that there exists an injection $\iota_\mu: TQ \to \R^n \times TQ , (x^i,y^i) \mapsto (y^a =\iota_\mu^a(x^i,y^i),x^i,y^i)$ which satisfies the identity,
\begin{equation} \label{id}
\fpd{L}{y^a} \circ\iota_\mu = \mu_a.
\end{equation}

We can now re-write the remaining Euler-Lagrange equations by making use of the so-called modified
Lagrangian function $L^\mu$, also often called {\em the Routhian}. This is the restriction of the function
$L- (\partial L/\partial y^a)y^a$
to the level set (\ref{levelset}) where the momentum is $\mu_a$. Given that the Lagrangian $L$ does not depend on $x^a$, it defines  a function on the tangent manifold of $Q$.
\begin{definition}
The Routhian of an $\R^n$-regular Lagrangian $L$ at the level set where the momentum is $\mu_a$ is the function on $TQ$ given by
\[
L_\mu= (L\circ\iota_\mu) -  \iota^a_\mu \mu_a.
 \]
 \end{definition}
Let us fix a Lagrangian vector field $\Gamma_L$ for $L$. Given that $\Gamma_L$ is tangent to the level set, its restriction to it defines a vector field $\Gamma^\mu_L$ on $TQ$, satisfying $\Gamma_L\circ\iota_\mu = T\iota_\mu\circ\Gamma^\mu_L$. We will often refer to the vector field $\Gamma^\mu_L$ on $TQ$ as the {\em Routhian vector field  of $\Gamma_L$}.

\begin{prop}\label{propthree}
The Routhian vector field $\Gamma_L^\mu$ of a Lagrangian vector field  $\Gamma_L$ on $\R^n\times TQ$ at the momentum level $\mu_a$ is a Lagrangian vector field  $\Gamma_{L_\mu}$ on $TQ$, for the Routhian $L_\mu$.
\end{prop}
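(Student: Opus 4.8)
The plan is to verify the two defining properties of a Lagrangian vector field for $L_\mu$ separately: that $\Gamma_L^\mu$ is second-order, and that it satisfies the Euler--Lagrange equations (\ref{ELeq}) for $L_\mu$. Throughout I would exploit the fact that $\Gamma_L^\mu$ and (the restriction of) $\Gamma_L$ are $\iota_\mu$-related, which by construction means $\Gamma_L\circ\iota_\mu = T\iota_\mu\circ\Gamma_L^\mu$. The single most useful consequence of this relatedness is the elementary identity
\[
\Gamma_L^\mu(h\circ\iota_\mu) = \Gamma_L(h)\circ\iota_\mu, \qquad h\in\cinfty{\R^n\times TQ},
\]
which lets me transport any computation on $\R^n\times TQ$ down to $TQ$.

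For the second-order condition I would write $\Gamma_L^\mu = A^i\fpd{}{x^i} + B^i\fpd{}{y^i}$ and compute $T\iota_\mu\circ\Gamma_L^\mu$ using $T\iota_\mu(\fpd{}{x^i}) = \fpd{}{x^i} + \fpd{\iota^a_\mu}{x^i}\fpd{}{y^a}$ and the analogous expression for $T\iota_\mu(\fpd{}{y^i})$. Matching the $\fpd{}{x^i}$-components of $\Gamma_L\circ\iota_\mu$ and $T\iota_\mu\circ\Gamma_L^\mu$ forces $A^i = y^i$, so $\Gamma_L^\mu$ is automatically second-order; matching the $\fpd{}{y^i}$-components yields $B^i = f^i\circ\iota_\mu$, while the $\fpd{}{y^a}$-components merely reproduce the tangency of $\Gamma_L$ to the level set that was built into the setup.

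For the Euler--Lagrange equations the crucial preliminary step is to differentiate the Routhian $L_\mu = (L\circ\iota_\mu) - \iota^a_\mu\mu_a$. Applying the chain rule to $L\circ\iota_\mu$ and substituting the defining identity (\ref{id}), $\fpd{L}{y^a}\circ\iota_\mu = \mu_a$, I expect the terms proportional to $\fpd{\iota^a_\mu}{y^i}$ and $\fpd{\iota^a_\mu}{x^i}$ to cancel exactly against the derivatives of $\iota^a_\mu\mu_a$, leaving the clean formulas
\[
\fpd{L_\mu}{y^i} = \fpd{L}{y^i}\circ\iota_\mu, \qquad \fpd{L_\mu}{x^i} = \fpd{L}{x^i}\circ\iota_\mu.
\]
I would then form the Euler--Lagrange expression for $L_\mu$, pull $\Gamma_L^\mu$ through $\iota_\mu$ with the relatedness identity above, and obtain
\[
\Gamma_L^\mu\!\left(\fpd{L_\mu}{y^i}\right) - \fpd{L_\mu}{x^i} = \left(\Gamma_L\!\left(\fpd{L}{y^i}\right) - \fpd{L}{x^i}\right)\circ\iota_\mu,
\]
which vanishes because the right-hand factor is precisely the Euler--Lagrange equation (\ref{ELeq}) for $L$ at index $i$, valid at every point of $\R^n\times TQ$ and in particular on the image of $\iota_\mu$.

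The main obstacle --- really the only place where care is needed --- is the cancellation in the computation of $\fpd{L_\mu}{y^i}$ and $\fpd{L_\mu}{x^i}$. This is exactly the point at which the momentum level set enters the construction: the cancellation succeeds only because $\iota_\mu$ satisfies (\ref{id}), and it is what singles out the Routhian as the correct object to differentiate. Once those two identities are in hand, the rest is a formal manipulation with $\iota_\mu$-related vector fields.
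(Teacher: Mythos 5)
Your proposal is correct and takes essentially the same route as the paper: both halves rest on the $\iota_\mu$-relatedness identity $\Gamma^\mu_L(h\circ\iota_\mu)=\Gamma_L(h)\circ\iota_\mu$ together with the cancellations $\fpd{L_\mu}{y^i}=\fpd{L}{y^i}\circ\iota_\mu$ and $\fpd{L_\mu}{x^i}=\fpd{L}{x^i}\circ\iota_\mu$ forced by the identity (\ref{id}), after which the Euler--Lagrange expression for $L_\mu$ is exactly the pullback of the one for $L$. The only cosmetic difference is the second-order check, which the paper does intrinsically via $T\tau_Q\circ\Gamma^\mu_L=T(p_1\circ\iota_\mu)\circ\Gamma^\mu_L=p_2\circ\iota_\mu=id$, while you verify the same fact by matching coordinate components of $T\iota_\mu\circ\Gamma^\mu_L$ against $\Gamma_L\circ\iota_\mu$.
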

\begin{proof}
We need to show that $\Gamma^\mu_L$ is a second-order field on $TQ$ and that it satisfies the Euler-Lagrange equations of $L_\mu$. For the first condition, remark that the second-order property of $\Gamma_L$, when thought of as its reduced vector field on the quotient $\R^n\times TQ$ can be translated as $Tp_1\circ\Gamma_L = p_2$, where $p_1$ and $p_2$ are the natural projections $p_1:\R^n\times TQ \to Q, (y^a,x^i,y^i)\mapsto (x^i)$ and $p_2:\R^n\times TQ \to TQ, (y^a,x^i,y^i)\mapsto (x^i,y^i)$, respectively. Then,
\[
T\tau_Q \circ \Gamma^\mu_L = T(p_1\circ\iota_\mu) \circ\Gamma^\mu_L = Tp_1 \circ \Gamma_L\circ\iota_\mu  = p_2 \circ\iota_\mu = id.
\]

For the second property, we know that, since $\Gamma_L$ and $\Gamma^\mu_L$ are $\iota_\mu$-related, we have that $\Gamma^\mu_L (h\circ\iota_\mu) = \Gamma_L(h) \circ\iota_\mu$, for all functions $h$ on $\R^n\times TQ$. Therefore,
\[
\Gamma^\mu_L \left(  \fpd{L}{y^i} \circ \iota_\mu\right) = \Gamma_L \left(\fpd{L}{y^i} \right ) \circ \iota_\mu = \fpd{L}{x^i}\circ\iota_\mu.
\]
The result then follows from the observation that
\[
\fpd{L_\mu}{x^i} = \fpd{L}{x^i}\circ\iota_\mu, \qquad\mbox{and}\qquad \fpd{L_\mu}{y^i} = \fpd{L}{y^i}\circ\iota_\mu + \left(\fpd{L}{y^a} \circ\iota_\mu\right)\fpd{\iota^a_\mu}{y^i}  - \mu_a \fpd{\iota^a_\mu}{y^i}  = \fpd{L}{y^i}\circ\iota_\mu,
\]
in view of the identity (\ref{id}).
\end{proof}

We can conclude that the solutions $(x^a(t),x^i(t))$ of the Euler-Lagrange equations of $L$ project on those $(x^i(t))$ of $L_\mu$. If we are able to calculate such a solution $(x^i(t))$, we can reconstruct a complete solution $(x^a(t),x^i(t))$ by solving the momentum equations $\dot{x}^a = \iota^a_\mu(x^i,{\dot x}^i)$ for $x^a(t)$.

Remark that we have nowhere assumed that the Lagrangian is regular as a whole, so that this procedure also holds for singular Lagrangians. The only condition we have used so far is that the Lagrangian is $\R^n$-regular.

\section{Homogeneous Lagrangians and Finsler functions} \label{secthree}

Let $\tau:\TMO\to M$ be the slit tangent bundle (tangent bundle with the zero section removed), with coordinates $(x^\alpha,y^\alpha)$. A function $F$ is (positively) 1-homogeneous if $\Delta_M(F)=F$, where $\Delta_M=y^\alpha\partial/\partial{y^\alpha}$ stands for the Liouville vector field on $M$. A spray is a second-order vector field on $\TMO$ for which $[\Delta_M,\Gamma]=\Gamma$.  A spray is therefore a vector field of the form (\ref{sode}) with $\Delta_M(f^\alpha) = 2 f^\alpha$.

Two sprays are said to be projectively equivalent if they have the same base integral curves with given initial point and direction, up to an orientation-preserving reparametrization. A set of sprays with this property is called a projective class of sprays. It is easy to see (see e.g.\ \cite{Szilasibook}) that this condition can be infinitesimally be characterized by the property that, if $\Gamma$ is a member of the class, then so is also the spray $\Gamma+P\Delta_M$ for any function $P$ for which $\Delta_M(P)=P$.

Let $F$ be a 1-homogeneous Lagrangian. We will denote its Hessian with respect to fibre coordinates by $h_{\alpha\beta} = \partial^2 F /\partial y^\alpha \partial y^\beta$. Then, $F$ is singular as a Lagrangian, since $h_{\alpha\beta}y^\beta =0$. We recall a few definitions from e.g.\ \cite{multiplier}.
\begin{definition}
If the kernel of the Hessian $h_{\alpha\beta}$ is exactly span$\langle y \rangle$, we say that $h_{\alpha\beta}$ is quasi-regular. The Hessian  $h_{\alpha\beta}$ is positive quasi-definite if $h_{\alpha\beta}w^\alpha w^\beta \geq 0$, with equality only when $w^\alpha=\lambda y^\beta$.
\end{definition}

Assume that $\Gamma_F$ is a Lagrangian vector field for $F$. Any other Lagrangian field $\Gamma_F + V^\alpha \partial /\partial y^\alpha$  satisfies
\begin{equation}\label{other}
V^\alpha h_{\alpha\beta}=0.
\end{equation}
In particular, if $\Gamma_F$ was a spray, then any other member of its projective class will also be a Lagrangian vector field for $F$. When the Hessian of $F$ is quasi-regular, we know that $h_{\alpha\beta}w^\beta = 0$ if and only if $w^\beta = \lambda y^\beta$ for some constant $\lambda$. In that case, we can conclude from relation (\ref{other}) that any other Lagrangian vector field is of the type $\Gamma_F + P(x,y) \Delta$, for some  function $P$. If, in addition to being quasi-regular, $F$ is positive everywhere we can conclude from Proposition~9.1.30 of \cite{Szilasibook} that the Hessian of $E=\onehalf F^2$ is non-degenerate, and that $E$ can therefore be though of as a regular Lagrangian. In that case, there exists a unique (canonical) spray $\Gamma_E$ that satisfies,
\[
 \Gamma_E\left( \fpd{E}{y^\alpha}  \right) - \fpd{E}{x^\alpha} = 0,
\]
the Euler-Lagrange equation for $E=\onehalf F^2$. Since $0=\Gamma_E(E) =  F\Gamma_E(F)$ (which expresses conservation of energy, valid for any autonomous Lagrangian) it is easy to see that $\Gamma_E$ also satisfies the Euler-Lagrange equations for $F$. The projective equivalence class of Lagrangians sprays of $F$ coincides, in that case, with the one of  $\Gamma_E$. The spray $\Gamma_E$ represents within that class the one for which the geodesics are parametrized by constant arc-length $F$.

\begin{definition} \label{deffour}
A Finsler function is a smooth function $F$ on $\TMO$
which is  positive, 1-homogeneous, and which is such that the matrix $(g_{\alpha\beta})= \left(\onehalf \partial^2 {F^2}/\partial{y^\alpha}\partial{y^\beta}\right)$ is everywhere positive-definite (i.e.\ $E=\onehalf F^2$ is strongly convex).
\end{definition}

We refer to e.g.\ \cite{BCS,Szilasibook} for more generalities on Finsler and spray geometry. The following result can be found in \cite{multiplier}.
\begin{prop} \label{multprop}
Let $F$ be a 1-homogeneous function such that its Hessian $h_{\alpha\beta}$ is positive quasi-definite. Then for any $x_0 \in M$ there is a neighbourhood $U$ of $x_0$ in $M$  and a function $\tilde F$ defined on $TU$ such that $\tilde F$ is a Finsler function which differs from $F$ by a total derivative.
\end{prop}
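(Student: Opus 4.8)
The plan is to look for $\tilde F$ of the form $\tilde F = F + \dot g$, where $g$ is a function on the neighbourhood $U$ and $\dot g = y^\alpha(\partial g/\partial x^\alpha)$ is its total derivative; this is what ``differs from $F$ by a total derivative'' should mean. The whole point of this ansatz is that $\dot g$ is linear in the fibre coordinates, so that $\Delta_M(\dot g)=\dot g$ (hence $\tilde F$ is again $1$-homogeneous) and $\partial^2\dot g/\partial y^\alpha\partial y^\beta=0$ (hence $\tilde F$ has exactly the same Hessian $h_{\alpha\beta}$ as $F$, which therefore stays positive quasi-definite). In view of the identity
\[
\onehalf\spd{\tilde F^2}{y^\alpha}{y^\beta}=\fpd{\tilde F}{y^\alpha}\fpd{\tilde F}{y^\beta}+\tilde F\,h_{\alpha\beta},
\]
positivity of $\tilde F$ upgrades positive quasi-definiteness of $h_{\alpha\beta}$ to positive-definiteness of the left-hand side (for a non-radial $w$ the second term is already positive, and for $w=\lambda y$ Euler's identity makes the first term equal to $\lambda^2\tilde F^2>0$). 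Thus $\tilde F$ is a Finsler function as soon as it is positive off the zero section, and the entire problem reduces to choosing $g$ on some $U$ with $F+\dot g>0$ there.

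First I would reduce this to a single inequality at $x_0$. By $1$-homogeneity it suffices to have positivity on the unit sphere bundle, which is compact over a compact neighbourhood; so if I can produce a constant covector $c_\alpha$ with $F(x_0,y)+c_\alpha y^\alpha>0$ for all $y\neq 0$, then the linear function $g(x)=c_\alpha x^\alpha$ makes $\tilde F(x_0,\cdot)$ positive, and by continuity $\tilde F$ stays positive over a sufficiently small neighbourhood $U$ of $x_0$. Everything then comes down to the existence of such a $c$ for the single $1$-homogeneous function $G:=F(x_0,\cdot)$ on $\R^n$.

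The key step, and the main obstacle, is the strict inequality
\[
G(y)+G(-y)>0\qquad\text{for all }y\neq 0.
\]
Positive semi-definiteness of $h_{\alpha\beta}$ (mere convexity) only gives ``$\geq 0$''; the real content is that positive quasi-\emph{definiteness} forbids $G$ from being affine along a line through the origin. To prove it I would restrict $G$ to a $2$-plane $\Pi$ spanned by $y$ and a transverse vector: the restriction is again $1$-homogeneous with positive quasi-definite Hessian (the kernel of $h_{\alpha\beta}$ is the radial line, which lies in $\Pi$), so in polar fibre coordinates $G|_\Pi=R\,\rho(\theta)$ the quasi-definiteness becomes $\rho+\rho''>0$. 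Pairing this with $\sin(\theta-\theta_0)$ on the half-circle $[\theta_0,\theta_0+\pi]$ and integrating by parts twice --- the Fredholm alternative for the operator $d^2/d\theta^2+1$, whose kernel there is spanned by $\sin(\theta-\theta_0)$ --- yields $G(y)+G(-y)=|y|\int_{\theta_0}^{\theta_0+\pi}(\rho+\rho'')\sin(\theta-\theta_0)\,d\theta>0$.

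Finally I would convert this into the covector. Because $h_{\alpha\beta}$ is positive semi-definite off the origin and the strict inequality handles the lines through the origin, $G$ is convex on all of $\R^n$; being also $1$-homogeneous and finite, it is the support function of the compact convex set $K=\{p:\langle p,y\rangle\leq G(y)\ \forall y\}$, and $G(y)+G(-y)$ is precisely the width of $K$ in the direction $y$. The strict inequality says $K$ has positive width in every direction, hence is full-dimensional and has nonempty interior. Choosing any $p_0\in\operatorname{int}K$ and setting $c_\alpha=-(p_0)_\alpha$ gives $G(y)-\langle p_0,y\rangle\geq\varepsilon|y|>0$ for all $y\neq 0$, which is the positivity needed at $x_0$ and completes the construction.
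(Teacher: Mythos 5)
The paper itself contains no proof of this proposition --- it is imported verbatim from \cite{multiplier} (``The following result can be found in \cite{multiplier}'') --- so there is no in-paper argument to compare against; your proposal must stand on its own, and it does: I find it correct. The three pillars all check out. First, adding $\dot g = y^\alpha\,\partial g/\partial x^\alpha$ preserves $1$-homogeneity and leaves $h_{\alpha\beta}$ unchanged, and the identity $\onehalf\,\partial^2\tilde F^2/\partial y^\alpha\partial y^\beta=(\partial\tilde F/\partial y^\alpha)(\partial\tilde F/\partial y^\beta)+\tilde F h_{\alpha\beta}$, combined with the Euler relations $h_{\alpha\beta}y^\beta=0$ and $(\partial\tilde F/\partial y^\alpha)y^\alpha=\tilde F$, does upgrade positive quasi-definiteness to positive-definiteness as soon as $\tilde F>0$; this is exactly the mechanism the paper itself invokes via Proposition~9.1.30 of \cite{Szilasibook}. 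Second, the planar reduction is sound: the Hessian of $G|_\Pi$ is the restriction of $h$ to $\Pi$, whose null directions in $\Pi$ are precisely the radial line (which lies in $\Pi$), the identification of the non-radial part of the planar Hessian with $(\rho+\rho'')/R$ is the classical two-dimensional formula, and your integration by parts is exact: the boundary terms $[\rho'\sin(\theta-\theta_0)]$ vanish while $-[\rho\cos(\theta-\theta_0)]$ produces precisely $\rho(\theta_0)+\rho(\theta_0+\pi)$, so $G(y)+G(-y)=|y|\int_{\theta_0}^{\theta_0+\pi}(\rho+\rho'')\sin(\theta-\theta_0)\,d\theta>0$. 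Third, the convex-analysis finish is correct: psd Hessian off the origin gives convexity along segments avoiding the origin, the inequality $G(y)+G(-y)\geq 0$ is exactly what is needed for convexity along lines through the origin, a finite sublinear function is the support function of a nonempty compact convex $K$, strict width in every direction forces $\operatorname{int}K\neq\emptyset$, and an interior point $p_0$ yields $G(y)-\langle p_0,y\rangle\geq\varepsilon|y|$; the final shrinking of $U$ by compactness of the unit-sphere bundle is standard. The only caveat worth recording is that your choice of a transverse vector spanning $\Pi$ implicitly assumes $\dim M\geq 2$; in dimension one the quasi-definiteness hypothesis is vacuous and the statement itself degenerates (e.g.\ $F=y$ admits no such $\tilde F$), but $\dim M\geq 2$ is the standing assumption in this context and is satisfied in the paper's application, where $M=\R\times Q$. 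Structurally, your route (symmetrized positivity $F(x_0,y)+F(x_0,-y)>0$ obtained from $\rho+\rho''>0$, then a supporting covector and a compactness argument) matches the standard proof pattern in the projective-metrizability literature from which the paper cites the result.
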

Obviously, the geodesics of $F$ and $\tilde F$ remain the same after a change by a total time derivative.

Let us consider now the case where $M=\R^n\times Q$ as before, but with a 1-homogeneous Lagrangian $L=F$.
\begin{prop} \label{prop1}
The Routhian $F_\mu$ of a 1-homogeneous $\R^n$-regular Lagrangian $F$ on $\R^n\times TQ$ is a 1-homogeneous function on $TQ$.
\end{prop}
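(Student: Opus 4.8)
The goal is to verify the defining relation for 1-homogeneity on $TQ$, namely $\Delta_Q(F_\mu)=F_\mu$, where $\Delta_Q=y^i\,\partial/\partial y^i$ denotes the Liouville field on $TQ$. The plan is to compute $\Delta_Q(F_\mu)$ directly from the definition $F_\mu=(F\circ\iota_\mu)-\iota^a_\mu\mu_a$, relying only on the conservation identity (\ref{id}) and on Euler's theorem applied to $F$ itself. I expect the whole argument to be a short computation rather than to hinge on any single hard step; the only point demanding care is keeping track of the derivatives of the solution map $\iota^a_\mu$ and seeing that they drop out.

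First I would apply $\Delta_Q$ to $F\circ\iota_\mu$. As a function on $TQ$ this reads $(x^i,y^i)\mapsto F(x^i,\iota^a_\mu(x^i,y^i),y^i)$, so the chain rule gives
\[
\fpd{}{y^i}(F\circ\iota_\mu)=\left(\fpd{F}{y^a}\circ\iota_\mu\right)\fpd{\iota^a_\mu}{y^i}+\fpd{F}{y^i}\circ\iota_\mu.
\]
Using (\ref{id}) to replace $(\partial F/\partial y^a)\circ\iota_\mu$ by the constant $\mu_a$ and contracting with $y^i$, I obtain $\Delta_Q(F\circ\iota_\mu)=\mu_a\,y^i\,\partial\iota^a_\mu/\partial y^i+y^i\,(\partial F/\partial y^i)\circ\iota_\mu$. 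On the other hand $\Delta_Q(\iota^a_\mu\mu_a)=\mu_a\,y^i\,\partial\iota^a_\mu/\partial y^i$, so the two contributions carrying $\partial\iota^a_\mu/\partial y^i$ cancel upon subtraction, leaving simply $\Delta_Q(F_\mu)=y^i\,(\partial F/\partial y^i)\circ\iota_\mu$.

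It then remains to identify the right-hand side with $F_\mu$. Here I would use that $F$ is 1-homogeneous, i.e.\ Euler's theorem $y^a\,\partial F/\partial y^a+y^i\,\partial F/\partial y^i=F$ on $\R^n\times TQ$. Composing this identity with $\iota_\mu$ and again invoking (\ref{id}) yields $\iota^a_\mu\mu_a+y^i\,(\partial F/\partial y^i)\circ\iota_\mu=F\circ\iota_\mu$, that is, $y^i\,(\partial F/\partial y^i)\circ\iota_\mu=(F\circ\iota_\mu)-\iota^a_\mu\mu_a=F_\mu$. Combining this with the previous display gives $\Delta_Q(F_\mu)=F_\mu$, which is exactly the claimed 1-homogeneity.

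The argument nowhere uses that $\iota^a_\mu$ is itself homogeneous, since the cross terms cancel automatically; this is the one spot where one might otherwise expect an obstacle. For completeness I note that $\iota^a_\mu$ is in fact 1-homogeneous in $y^i$: because $\partial F/\partial y^a$ is 0-homogeneous in the fibre variables, both $s\,\iota^b_\mu(x^i,y^i)$ and $\iota^b_\mu(x^i,sy^i)$ solve the momentum equation at the scaled point $(x^i,sy^i)$, and the local uniqueness guaranteed by $\R^n$-regularity forces $\iota^a_\mu(x^i,sy^i)=s\,\iota^a_\mu(x^i,y^i)$. This provides an equivalent route, in which one substitutes $y^i\,\partial\iota^a_\mu/\partial y^i=\iota^a_\mu$ directly, but it is not required for the main computation.
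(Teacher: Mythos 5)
Your proof is correct, but it takes a genuinely different route from the paper's. The paper first proves that the functions $\iota^a_\mu$ are themselves 1-homogeneous: it applies $\Delta_Q$ to the identity (\ref{id}), uses the homogeneity relation $y^ih_{ai}=-y^bh_{ab}$ of (\ref{hom}), and then invokes $\R^n$-regularity a second time to invert the matrix $(h_{ab})$ and conclude $\Delta_Q(\iota^b_\mu)=\iota^b_\mu$; the 1-homogeneity of $F_\mu=(F\circ\iota_\mu)-\mu_a\iota^a_\mu$ then follows since $F$ and the $\iota^a_\mu$ are homogeneous. You instead bypass the homogeneity of $\iota^a_\mu$ entirely: your cancellation of the $\partial\iota^a_\mu/\partial y^i$ terms is exactly the identity $\partial F_\mu/\partial y^i=(\partial F/\partial y^i)\circ\iota_\mu$ already established in the proof of Proposition~\ref{propthree}, and combining it with Euler's theorem for $F$ composed with $\iota_\mu$ gives $\Delta_Q(F_\mu)=F_\mu$ directly. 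Your version uses $\R^n$-regularity only through the assumed existence of $\iota_\mu$, needs no Hessian computation, and makes transparent why the derivatives of the solution map drop out. One caveat: the paper later relies on the byproduct of its own proof --- in Section~4 it concludes ``From Proposition~\ref{prop1} we may conclude that $\iota_e$ is always a 1-homogeneous function'', which is really a conclusion of the paper's \emph{argument}, not of the bare statement. Your closing remark, deriving $\iota^a_\mu(x^i,sy^i)=s\,\iota^a_\mu(x^i,y^i)$ from the 0-homogeneity of $\partial F/\partial y^a$ and the local uniqueness furnished by $\R^n$-regularity, recovers exactly this fact (one should add the standard open-closed argument in $s$ to globalize the local uniqueness), so nothing needed downstream is lost.
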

\begin{proof}
The Routhian $F_\mu = (F \circ\iota_\mu)-\mu_a\iota_\mu^a$ on $TQ$ will be 1-homogeneous if $F$ is, and if the functions $\iota^a_\mu$ are.
Since $F$ is 1-homogeneous, we have, among other, that
\begin{equation} \label{hom}
y^ih_{ai} = - y^b h_{ab}.
\end{equation}
Let $\Delta_Q = y^i \fpd{}{y^i}$. By taking a $\Delta_Q$-derivative of the identity (\ref{id}) we get
\[
0 = y^i\left( h_{ai}\circ\iota_\mu\right) + \left(h_{ab}\circ\iota_\mu\right) \fpd{\iota^b_\mu}{y^i}y^i =(h_{ab} \circ \iota_\mu) \left( -\iota^b_\mu + \Delta_Q(\iota_\mu^b) \right).
\]
Due to the assumed $\R^n$-regularity, we can conclude that $\Delta_Q(\iota^b_\mu)=\iota^b_\mu$.
\end{proof}

Remark that the same conclusion does not hold for a $k$-homogeneous lagrangian with $k\geq 2$ (such as, for example, $F=\onehalf g_{\alpha\beta}y^\alpha y^\beta$), because of the appearance of extra terms in expression (\ref{hom}).

\begin{prop} \label{propsix} Assume that $F$ is 1-homogeneous and $\R^n$-regular. If the Hessian of $F$ is positive quasi-definite, then so is the Hessian of $F_\mu$.
\end{prop}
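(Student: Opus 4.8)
The plan is to compute the fibre Hessian $(h_\mu)_{ij}=\spd{F_\mu}{y^i}{y^j}$ of the Routhian explicitly in terms of the blocks $h_{ab},h_{ai},h_{ij}$ of the full Hessian of $F$, and to recognise it as a Schur complement. As a starting point I would reuse the identity $\fpd{F_\mu}{y^i}=\fpd{F}{y^i}\circ\iota_\mu$ that was already established in the proof of Proposition~\ref{propthree}. Differentiating it once more with respect to $y^j$ and applying the chain rule to $\iota_\mu$ gives
\[
(h_\mu)_{ij} = (h_{ij}\circ\iota_\mu) + (h_{ia}\circ\iota_\mu)\,\fpd{\iota^a_\mu}{y^j}.
\]
To evaluate $\partial\iota^a_\mu/\partial y^j$ I would differentiate the momentum identity (\ref{id}) with respect to $y^j$, obtaining $(h_{ab}\circ\iota_\mu)\,\partial\iota^b_\mu/\partial y^j=-(h_{aj}\circ\iota_\mu)$. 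Here the $\R^n$-regularity is precisely what lets us invert the block $h_{ab}$; writing $h^{ab}$ for its inverse, I arrive at the Schur-complement expression $(h_\mu)_{ij}=\bigl(h_{ij}-h_{ia}h^{ab}h_{bj}\bigr)\circ\iota_\mu$.

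The heart of the argument is then a completing-the-square identity in the $\R^n$-block. For a fixed $v^i$ I would introduce the lift $W^\alpha=(W^a,v^i)$ on $\TMO$ with $W^a=-(h^{ab}h_{bi})\circ\iota_\mu\,v^i$. A purely algebraic manipulation, valid because $h_{ab}$ is invertible and nothing stronger, shows that the full quadratic form splits as $h_{\alpha\beta}W^\alpha W^\beta = h_{ab}\tilde w^a\tilde w^b+(h_\mu)_{ij}v^iv^j$, and the chosen $W^a$ makes the shift $\tilde w^a$ vanish, so that
\[
(h_\mu)_{ij}v^iv^j = (h_{\alpha\beta}\circ\iota_\mu)\,W^\alpha W^\beta.
\]
Since $h_{\alpha\beta}$ is positive quasi-definite, the right-hand side is $\geq 0$, which immediately yields that $(h_\mu)_{ij}$ is positive semi-definite.

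It then remains to pin down the kernel. If $(h_\mu)_{ij}v^iv^j=0$, the displayed identity forces $(h_{\alpha\beta}\circ\iota_\mu)W^\alpha W^\beta=0$, so positive quasi-definiteness of $h_{\alpha\beta}$ gives $W^\alpha=\lambda y^\alpha$ at that point, and reading off the $Q$-components yields $v^i=\lambda y^i$. For the reverse inclusion I would start from $v^i=\lambda y^i$ and verify that its canonical lift is $W^\alpha=\lambda y^\alpha$: this is where the 1-homogeneity of $F$ re-enters, through $h_{\alpha\beta}y^\beta=0$, whose $a$-component is exactly (\ref{hom}); it gives $h_{bi}y^i=-h_{bc}y^c$ and hence $W^a=-(h^{ab}h_{bi})y^i\lambda=\lambda y^a$. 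Consequently $(h_\mu)_{ij}v^iv^j=h_{\alpha\beta}(\lambda y^\alpha)(\lambda y^\beta)=0$, so $\ker(h_\mu)$ equals $\mathrm{span}\langle y\rangle$ and $(h_\mu)_{ij}$ is positive quasi-definite.

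The step I expect to demand the most care is the equality case, and in particular the inclusion $\mathrm{span}\langle y^i\rangle\subseteq\ker(h_\mu)$, which is not automatic from the Schur complement alone and genuinely relies on 1-homogeneity to certify that the lift of $\lambda y^i$ is $\lambda y^\alpha$. I would also check at the outset that all evaluations occur on the image of $\iota_\mu$, where $y^i\neq 0$, so that the degenerate direction $y^\alpha=(\iota^a_\mu,y^i)$ is nonzero and the quasi-definiteness hypothesis on $h_{\alpha\beta}$ really applies; the conclusion is in pleasant agreement with Proposition~\ref{prop1}, which, by making $F_\mu$ 1-homogeneous, already predicts that $y^i$ must lie in the kernel of $(h_\mu)_{ij}$.
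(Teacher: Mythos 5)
Your proposal is correct and follows essentially the same route as the paper's own proof: the same Schur-complement computation of the Hessian of $F_\mu$ via $\partial\iota^a_\mu/\partial y^j=-(h^{ab}h_{bj})\circ\iota_\mu$, the same lift $W^\alpha$ with $W^i=w^i$ and $W^a=-(h^{ab}\circ\iota_\mu)(h_{bj}\circ\iota_\mu)w^j$ yielding $(h_\mu)_{ij}w^iw^j=(h_{\alpha\beta}\circ\iota_\mu)W^\alpha W^\beta$, and the same appeal to positive quasi-definiteness together with relation (\ref{hom}) to settle the equality case. Your explicit check of the reverse inclusion $\mathrm{span}\langle y\rangle\subseteq\ker(h_\mu)$ simply spells out what the paper compresses into the remark that $w^i=\lambda y^i$ is ``not in conflict with'' $W^a=\lambda y^a$.
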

\begin{proof}
We have
\[
\spd{F_\mu}{y^i}{y^j}w^iw^j = \left(h_{ij} \circ \iota_\mu \right)w^iw^j + \left( h_{ia}\circ\iota_\mu\right) \fpd{\iota^a_\mu}{y^j}w^iw^j
\]
After taking a $\partial/\partial y^i$-derivative of the identity (\ref{id}), we get that $ \partial{\iota^a_\mu}/\partial{y^j} = - h^{ab}h_{bj}$. With this, we can write the above as
\begin{eqnarray*}
\spd{F_\mu}{y^i}{y^j}w^iw^j &=& (h_{ij}\circ \iota_\mu)w^iw^j - 2 (h_{ia}\circ \iota_\mu) w^i (h^{ab}\circ \iota_\mu)(h_{bj}\circ \iota_\mu)w^j \\&&+  (h_{ia}\circ \iota_\mu) (h^{ab}\circ \iota_\mu) (h_{bj}\circ \iota_\mu) w^iw^j \\
&=& (h_{\alpha\beta}\circ \iota_\mu)W^\alpha W^\beta
\end{eqnarray*}
with $W^i=w^i$ and $W^a = - (h^{ab}\circ \iota_\mu)(h_{bj}\circ \iota_\mu)w^j$. Since the right-hand side is always positive or zero, so is the left-handside. The right hand side can only be equal to zero if the  $W^\alpha=\lambda y^\alpha$, but then $w^i=\lambda y^i$ and and this is not in conflict with $W^a =\lambda y^a$, in view of relation (\ref{hom}).
\end{proof}
In view of Proposition~\ref{multprop}, we can conclude from Proposition~\ref{prop1} and \ref{propsix} that, in case  the Hessian of $F$ is positive quasi-definite, we may add a suitable total time derivative to $F_\mu$ to obtain a local Finsler function.

\begin{prop}\label{proptwo}
The Routhian vector field of a Lagrangian spray $\Gamma_F$ of a 1-homogeneous and $\R^n$-regular Lagrangian $F$ is again a spray.
\end{prop}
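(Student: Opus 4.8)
The plan is to reduce this to the spray condition already satisfied by $\Gamma_F$. By Proposition~\ref{propthree} we already know that $\Gamma^\mu_F$ is a second-order vector field on $TQ$ (it is a Lagrangian vector field for $F_\mu$), so the only thing left to establish is the homogeneity condition $[\Delta_Q,\Gamma^\mu_F]=\Gamma^\mu_F$. Since by hypothesis $\Gamma_F$ is a spray, it satisfies $[\Delta_M,\Gamma_F]=\Gamma_F$ on $\R^n\times TQ$, and since $\Gamma^\mu_F$ is by definition $\iota_\mu$-related to $\Gamma_F$, i.e.\ $\Gamma_F\circ\iota_\mu=T\iota_\mu\circ\Gamma^\mu_F$, I would like to transport the bracket identity across $\iota_\mu$.

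The key step is to show that the Liouville field $\Delta_Q$ on $TQ$ is $\iota_\mu$-related to the Liouville field $\Delta_M$ on $\R^n\times TQ$, that is,
\[
T\iota_\mu\circ\Delta_Q = \Delta_M\circ\iota_\mu.
\]
This is where Proposition~\ref{prop1} does the real work. Computing $T\iota_\mu$ applied to $\Delta_Q=y^i\partial/\partial y^i$ produces, besides the expected $y^i\partial/\partial y^i$ term, an extra component along $\partial/\partial y^a$ equal to $y^j(\partial\iota^a_\mu/\partial y^j)=\Delta_Q(\iota^a_\mu)$. By the 1-homogeneity of the functions $\iota^a_\mu$ proved in Proposition~\ref{prop1} we have $\Delta_Q(\iota^a_\mu)=\iota^a_\mu$, and since $y^a\circ\iota_\mu=\iota^a_\mu$ this is precisely the $y^a$-component of $\Delta_M\circ\iota_\mu$. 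Hence the two Liouville fields are $\iota_\mu$-related; geometrically, $\iota_\mu$ intertwines the dilation on $TQ$ with the dilation on $\R^n\times TQ$, exactly because the momentum-solving functions $\iota^a_\mu$ scale linearly in the velocities.

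With both relatedness statements in hand the conclusion is immediate: the Lie bracket of $\iota_\mu$-related vector fields is again $\iota_\mu$-related, so $[\Delta_Q,\Gamma^\mu_F]$ is $\iota_\mu$-related to $[\Delta_M,\Gamma_F]=\Gamma_F$. Therefore
\[
T\iota_\mu\circ[\Delta_Q,\Gamma^\mu_F] = \Gamma_F\circ\iota_\mu = T\iota_\mu\circ\Gamma^\mu_F,
\]
and since $\iota_\mu$ is an injective immersion, $T\iota_\mu$ is fibrewise injective, so I can cancel it to obtain $[\Delta_Q,\Gamma^\mu_F]=\Gamma^\mu_F$, which is the spray condition. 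I expect the only delicate point to be the verification of the relatedness of the Liouville fields; everything else is formal. If one prefers a hands-on route, the same fact can be checked in coordinates: writing $\Gamma^\mu_F=y^i\partial/\partial x^i+\bar f^i\partial/\partial y^i$ with $\bar f^i=f^i\circ\iota_\mu$, a chain-rule computation of $\Delta_Q(\bar f^i)$ converts the 2-homogeneity $\Delta_M(f^i)=2f^i$ of $\Gamma_F$ into $\Delta_Q(\bar f^i)=2\bar f^i$, again using $\Delta_Q(\iota^a_\mu)=\iota^a_\mu$ to account for the $y^a$-dependence. There is thus no genuine obstacle beyond invoking Proposition~\ref{prop1} at the right moment.
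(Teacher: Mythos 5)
Your proof is correct, but it runs the argument in the opposite direction from the paper, and with a different key lemma. The paper transports the bracket identity through the \emph{projection}: it notes $[\Gamma_F,\Delta_M]=\Gamma_F$ on $T(\R^n\times Q)$ and asserts that $\Gamma_F$ and $\Delta_M$ are $p_3$-related (for $p_3:\R^n\times Q\to Q$) to $\Gamma_{F_\mu}$ and $\Delta_Q$, so the brackets are related too; in that direction the relatedness of the two Liouville fields is automatic (the tangent projection simply kills the $y^a$-components), and Proposition~\ref{prop1} is never invoked --- though, strictly, the relatedness of $\Gamma_F$ to $\Gamma_{F_\mu}$ under $Tp_3$ only holds along the momentum level set, a restriction the paper leaves implicit (it rests on $\Delta_M$ and $\Gamma_F$ being tangent to that level, where for $\Delta_M$ one uses $\Delta_M(\partial F/\partial y^a)=0$, again a consequence of 1-homogeneity). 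You instead work through the \emph{embedding} $\iota_\mu$: there the relatedness of the dynamics is the defining property $\Gamma_F\circ\iota_\mu=T\iota_\mu\circ\Gamma^\mu_F$, and the nontrivial point shifts to the relatedness $T\iota_\mu\circ\Delta_Q=\Delta_M\circ\iota_\mu$, which you correctly reduce to $\Delta_Q(\iota^a_\mu)=\iota^a_\mu$, i.e.\ to Proposition~\ref{prop1}; your cancellation of $T\iota_\mu$ via fibrewise injectivity is legitimate since $\iota_\mu$ is a graph map, hence an embedding. What each route buys: yours is a globally exact statement on $TQ$, makes explicit where the 1-homogeneity of $F$ (through the homogeneity of $\iota^a_\mu$) enters, and also records the second-order property via Proposition~\ref{propthree}, which the paper's proof does not address; the paper's version is shorter and avoids Proposition~\ref{prop1} at the cost of an implicit restriction-to-the-level-set step. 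Your closing coordinate check of $\Delta_Q(\bar f^i)=2\bar f^i$ is also correct and amounts to the same chain-rule computation.
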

\begin{proof}
The vector field $\Gamma_F$, now thought of as a vector field on $T(\R^n\times Q)$, and the vector field $\Delta_M = y^\alpha \partial / \partial y^\alpha$ satisfy the bracket relation $[\Gamma_F,\Delta_M] =\Gamma_F$. If we set $p_3: \R^n \times Q \to Q$, then $\Gamma_F$ and $\Delta_M$ are $p_3$-related to, respectively, the Routhian vector field $\Gamma_{F_\mu}$ and the vector field $\Delta_Q = y^i \partial /\partial y^i$, and therefore so are their brackets.
\end{proof}

Remark that, for example in the case where the Hessian of $F$ is positive quasi-definite, the canonical spray of $F_\mu$ (with $E_\mu = \onehalf F^2_\mu$) is not the Routhian vector field  $\Gamma^\mu_E$ of the canonical spray $\Gamma_E$ of $F$, but its projective equivalent $\Gamma^\mu_E +P\Delta_Q$, where the projective factor is  $P=- (\mu_a \Gamma_E^\mu(\iota^a_e))/F_\mu$. This is in agreement with the fact that the Routhian of $E$ is no longer a 2-homogeneous function.

\section{Time-dependent Lagrangians and autonomous Lagrangians} \label{secfour}

We now say a few words about time-dependent Lagrangian systems (see e.g.\ \cite{LR} for more details).

Let  $\L \in \cinfty{\R\times TQ}$ be a {\em time-dependent} regular Lagrangian. If we use coordinates $(t,x^i,v^i)$ on $\R\times TQ$, then regularity means here `$TQ$-regularity', in the sense that the Hessian matrix $(\partial^2 \L/\partial v^i\partial v^j)$ is non-singular everywhere. The Euler-Lagrange equations of $\L$ can be thought of as being given by the integral curves the unique {\em time-dependent second-order vector field}
\[
\Gamma_\L = \fpd{}{t} + v^i\fpd{}{x^i} + f^i(t,x,v)\fpd{}{v^i}
\]
that is determined by
\[
\Gamma_\L\left( \fpd{\L}{{v}^i}  \right) - \fpd{\L}{x^i} = 0.
\]

We recall  from e.g.\ \cite{SV} that there exists a certain equivalence between time-dependent Lagrangians and 1-homogeneous Lagrangian functions. The `homogenization trick' relies on augmenting the dimension of the configuration space by one, by adding an extra `time-velocity' to the picture.  With each time-dependent Lagrangian we may  define a 1-homogeneous function $F$ by setting
\begin{equation} \label{defF}
F(x^0,x^i,y^0,y^i) = y^0\L (x^0,x^i,y^i/y^0).
\end{equation}
This function is only well-defined for $y^0\neq 0$. For most of what we say below, we will restrict our attention to vectors with  $y^0>0$. We will use the notation $T^\circ\!(\R\times Q)$ for this set.

The manifold $\R\times TQ$ may be identified with the submanifold $y^0=1$ in $T^\circ\!(\R\times Q)$. If we restrict $F$ to it, we get back the time-dependent Lagrangian $\L$. In what follows, we will use the notations $\pi: T^\circ\!(\R \times Q) \to \R \times TQ$ for the map $(x^0,x^i,y^0,y^i) \mapsto (t=x^0,x^i=x^i,v^i = y^i/y^0)$ and $j: \R\times TQ \to T^\circ\!(\R \times Q)$ for $(t,x^i,v^i) \mapsto (x^0=t,x^i=x^i,y^0=1,y^i=v^i)$. Then $\pi\circ j = id$ (but not the other way round), $\L=F\circ j$ and $F = \L \circ \pi$. The following relations hold between their derivatives:
\begin{eqnarray*}
&&\fpd{F}{x^\alpha}= \fpd{\L}{x^\alpha}\circ \pi,\qquad
\fpd{F}{y^i}  = \fpd{\L}{v^i}\circ\pi,\\ && \fpd{F}{y^0}= (\L\circ\pi)-\frac{y^i}{y^0}\left(\fpd{\L}{v^i}\circ\pi\right).
\end{eqnarray*}

Since $F$ is singular as a Lagrangian, there exists many second-order vector fields on $T^\circ\!(\R\times Q)$ which satisfy its Euler-Lagrange expressions. Let us assume for now that $E=\onehalf F^2$ defines a regular Lagrangian (we show in Proposition~\ref{propfive} that this is a quite natural assumption). Let us choose the unique spray in the projective class of $\Gamma_E$ that is tangent to the submanifold $y^0=1$, i.e.\ $\Gamma_{F,y^0} = \Gamma_E +  P \Delta$ with $P=-\Gamma_E(y^0)/y^0$. A version of the next proposition can also be found in \cite{SV}. Let us denote by $\Gamma_t$ the restriction  of $\Gamma_F$ to $y^0=1$. $\Gamma_t$ is the vector field  on $\R\times TQ$ that is $j$-related to $\Gamma_F$.

\begin{prop}\label{propfour} If $\Gamma_{F,y^0}$ is the spray that is tangent to the submanifold $y^0=1$, its restriction $\Gamma_t$ to $y^0=1$ can be identified with the time-dependent Lagrangian vector field $\Gamma_\L$ of $\L$ on $\R\times TQ$.
\end{prop}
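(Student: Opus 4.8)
The plan is to show that the restricted vector field $\Gamma_t$ is a time-dependent second-order vector field on $\R\times TQ$ which satisfies the Euler-Lagrange equations of $\L$. Since $\L$ is regular, its time-dependent Lagrangian vector field is unique, and the identification $\Gamma_t=\Gamma_\L$ will then follow at once. Everything therefore reduces to (i) verifying the second-order property of $\Gamma_t$, and (ii) transferring the Euler-Lagrange equations of $F$ down to $\L$ along $j$.

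First I would record that $\Gamma_{F,y^0}$ is itself a Lagrangian vector field for $F$. By construction it equals $\Gamma_E+P\Delta$, so it differs from the canonical spray $\Gamma_E$ by the vertical field $P\Delta = Py^\alpha\,\partial/\partial y^\alpha$; since $F$ is $1$-homogeneous its Hessian annihilates $y$, that is $h_{\alpha\beta}y^\alpha=0$, so relation (\ref{other}) is satisfied and $\Gamma_{F,y^0}$ obeys the same Euler-Lagrange expressions $\Gamma_{F,y^0}(\partial F/\partial y^\alpha)-\partial F/\partial x^\alpha=0$ as $\Gamma_E$ does. For the second-order property I would write $\Gamma_E$ in the spray form (\ref{sode}): its $\partial/\partial x^0$- and $\partial/\partial x^i$-components are $y^0$ and $y^i$, while the choice $P=-\Gamma_E(y^0)/y^0$ is designed precisely so that the $\partial/\partial y^0$-component of $\Gamma_{F,y^0}$ vanishes identically. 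This vanishing is exactly the tangency to $y^0=1$ that lets $\Gamma_{F,y^0}$ restrict to a vector field there. Pushing that restriction forward by $j$ and using $v^i=y^i/y^0=y^i$ along $y^0=1$, I obtain $\Gamma_t=\partial/\partial t+v^i\,\partial/\partial x^i+(f^i\circ j)\,\partial/\partial v^i$ for suitable functions $f^i$, which is indeed a time-dependent second-order vector field on $\R\times TQ$.

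It then remains to transfer the Euler-Lagrange equations. Taking the components with $\alpha=i$ and using the derivative relations $\partial F/\partial y^i=(\partial\L/\partial v^i)\circ\pi$ and $\partial F/\partial x^i=(\partial\L/\partial x^i)\circ\pi$, I would compose the identity $\Gamma_{F,y^0}(\partial F/\partial y^i)=\partial F/\partial x^i$ with $j$. Because $\Gamma_t$ is $j$-related to $\Gamma_{F,y^0}$, we have $\Gamma_t(h\circ j)=\Gamma_{F,y^0}(h)\circ j$ for every function $h$, and because $\pi\circ j=\mathrm{id}$ the pulled-back momenta collapse to $(\partial\L/\partial v^i)\circ\pi\circ j=\partial\L/\partial v^i$, and likewise for the force term. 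This yields $\Gamma_t(\partial\L/\partial v^i)-\partial\L/\partial x^i=0$, precisely the Euler-Lagrange equations of $\L$, and uniqueness of the time-dependent Lagrangian vector field of the regular $\L$ finishes the argument.

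The main obstacle I anticipate is the careful bookkeeping of the two maps $\pi$ and $j$: one must make sure that the $1$-homogeneous momentum $\partial F/\partial y^i$ restricts to exactly $\partial\L/\partial v^i$ on $y^0=1$, with no stray factors of $y^0$, which is guaranteed only by the precise derivative relations above together with $\pi\circ j=\mathrm{id}$. By contrast, the $\alpha=0$ component of the Euler-Lagrange equations plays no role in the identification and should be consistent automatically by $1$-homogeneity; I would not need to invoke it.
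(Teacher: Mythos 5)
Your proposal is correct and takes essentially the same approach as the paper: you use the $j$-relatedness $\Gamma_t(h\circ j)=\Gamma_{F,y^0}(h)\circ j$ together with the derivative relations and $\pi\circ j=\mathrm{id}$ to push the Euler-Lagrange equations of $F$ down to those of $\L$, verify the time component of $\Gamma_t$ is $1$, and conclude by uniqueness of the Lagrangian vector field of the regular $\L$. Your additional verifications---that $\Gamma_{F,y^0}=\Gamma_E+P\Delta$ is still a Lagrangian vector field for $F$ because $h_{\alpha\beta}y^\beta=0$, and the explicit second-order coordinate form of the restriction---merely make explicit facts the paper established in Section~\ref{secthree} and invokes implicitly.
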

\begin{proof}
Since we know that $\Gamma_t (h\circ j ) = \Gamma_F(h) \circ j$, for all functions $h$ on $T(\R\times Q)$. Therefore
\[
\Gamma_t \left(  \fpd{\L}{v^i}  \right)  =
\Gamma_t \left(  \fpd{F}{y^i} \circ j \right) = \Gamma_F \left(\fpd{F}{y^i} \right) \circ j = \fpd{\L}{x^i} \circ (\pi\circ j) = \fpd{\L}{x^i}.
\]
Moreover, $\langle\Gamma_t,dt\rangle = j^* \langle\Gamma_F,dx^0\rangle = j^*y^0=1$, and the statement follows.
\end{proof}

Fixing one particular element of the projective class of sprays, in effect, fixes a certain parametrization for the geodesics of $F$.

Suppose now that we are given an {\em autonomous} Lagrangian $L$ on $TQ$. We may think of it as a function $\L=L\circ p$ on $\R\times TQ$ (with $p: \R \times TQ\to TQ$) which so happens not to depend on time $t$ explicitly. If we assume that $L$ is regular, there is only one Lagrangian vector field $\Gamma_L$ on $TQ$. Also $\L$ is regular in that case and the relation between $\Gamma_L$ and the time-dependent Lagrangian vector field on $\R\times TQ$ can simply be written as $\Gamma_\L = \partial/\partial t + \Gamma_L$. In what follows we will no longer make a notational distinction between $\L$ and $L$. When necessary, however, we will use different notations for the corresponding vector fields $\Gamma_L$ (on $TQ$) and $\Gamma_\L$ (on $\R\times TQ$).

It is well-known that for an autonomous Lagrangian, $\Gamma_\L$ (and $\Gamma_L$) exhibits an energy first integral $E_L = \Delta_Q(L)-L$. If we now use the homogenization trick to construct the 1-homogeneous function $F$ as above in (\ref{defF}), we easily see that the coordinate $x^0$ is cyclic for $F$. Therefore there is conservation of the momentum, of the type
\begin{equation}\label{homenergy}
\fpd{F}{y^0}(x^a,y^a) = -e
\end{equation}
(the minus sign is chosen for later convenience). In order to proceed with a  Routh-type reduction (as in the previous sections) we must be able to re-express the above momentum relation
in the form $y^0 = \iota_e(x^i,y^i)$. The condition for this locally to happen is that $F$ is $\R$-regular, which means here that  $\partial^2 F/\partial (y^0)^2 \neq 0$.
\begin{prop} \label{propfive}
For an autonomous strongly convex Lagrangian $L$, the homogeneous function $F$ in (\ref{defF}) is $\R$-regular and the Hessian of $F$ is positive quasi-definite in $T^\circ\!(\R\times TQ)$.
\end{prop}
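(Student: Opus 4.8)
The plan is to compute the fibre Hessian $h_{\alpha\beta}=\partial^2 F/\partial y^\alpha\partial y^\beta$ of $F$ explicitly and then read off both claims from its structure. Because $L$ is autonomous, $(\ref{defF})$ collapses to $F(x^0,x^i,y^0,y^i)=y^0L(x^i,v^i)$ with $v^i=y^i/y^0$, so the whole argument reduces to differentiating a composition. Writing $L_i=\partial L/\partial v^i$ and $g_{ij}=\partial^2 L/\partial v^i\partial v^j$ (positive-definite, by strong convexity) and using $\partial v^k/\partial y^i=\delta^k_i/y^0$, $\partial v^k/\partial y^0=-v^k/y^0$, I would first recover the first derivatives $\partial F/\partial y^i=L_i$ and $\partial F/\partial y^0=L-v^iL_i=-E_L$ --- the latter being a useful consistency check against $(\ref{homenergy})$ --- and then differentiate once more to obtain
\[
h_{ij}=\frac{g_{ij}}{y^0},\qquad h_{0i}=-\frac{g_{ij}v^j}{y^0},\qquad h_{00}=\frac{g_{ij}v^iv^j}{y^0}.
\]

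For the $\R$-regularity I only need the corner entry. Since $g_{ij}$ is positive-definite and $y^0>0$, the quantity $\partial^2F/\partial(y^0)^2=h_{00}=g_{ij}v^iv^j/y^0$ is strictly positive whenever $v\neq 0$, i.e.\ on the region $y^i\neq 0$ over which the momentum relation $(\ref{homenergy})$ must be inverted for $y^0$. That is precisely the $\R$-regularity required to define $\iota_e$.

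The heart of the matter is the quasi-definiteness, and the step I expect to carry the argument is recognising that contracting the Hessian with an arbitrary vector $w=(w^0,w^i)$ yields a perfect square. A direct expansion should give
\[
h_{\alpha\beta}w^\alpha w^\beta=\frac{1}{y^0}\left(g_{ij}v^iv^j(w^0)^2-2g_{ij}v^jw^0w^i+g_{ij}w^iw^j\right)=\frac{1}{y^0}\,g_{ij}(w^0v^i-w^i)(w^0v^j-w^j),
\]
so the form is nothing but the pullback of the positive-definite form $g_{ij}$ along the linear map $w\mapsto u$, $u^i=w^0v^i-w^i$. As $y^0>0$, the right-hand side is non-negative, and it vanishes exactly when $u^i=0$ for all $i$, that is $w^i=w^0v^i=(w^0/y^0)y^i$. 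Putting $\lambda=w^0/y^0$ this reads $w^\alpha=\lambda y^\alpha$, so equality holds only along the Liouville direction, which is the definition of positive quasi-definiteness.

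The one delicate point I would flag is that $h_{00}$ degenerates on the locus $v=0$ (equivalently $y^i=0$, $y^0>0$), where the kernel direction $y$ becomes parallel to the $y^0$-axis; there the literal condition $\partial^2F/\partial(y^0)^2\neq 0$ fails, even though the perfect-square computation shows that positive quasi-definiteness persists. Since the reduction and the resulting geodesic problem take place on the slit bundle $T^\circ\!Q$ where $y^i\neq 0$, this locus lies outside the region of interest, so the $\R$-regularity statement should be understood there and causes no obstruction.
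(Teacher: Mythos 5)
Your proposal is correct and takes essentially the same route as the paper's own proof: both compute the fibre Hessian of $F=y^0L(x,y/y^0)$ (the paper records $\partial^2F/\partial(y^0)^2=\frac{1}{(y^0)^3}g_{ij}y^iy^j$, which matches your $h_{00}=g_{ij}v^iv^j/y^0$) and establish quasi-definiteness by recognizing the contracted Hessian as the perfect square $\frac{1}{y^0}\,g_{ij}\bigl(w^i-\frac{y^i}{y^0}w^0\bigr)\bigl(w^j-\frac{y^j}{y^0}w^0\bigr)$, with kernel exactly the Liouville direction $w^\alpha=\lambda y^\alpha$, $\lambda=w^0/y^0$. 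Your closing caveat about the degeneration of $h_{00}$ on the locus $y^i=0$ (where positive quasi-definiteness survives but the literal $\R$-regularity condition fails) is a genuine subtlety that the paper's ``the first statement follows'' passes over silently, and your resolution --- that the inversion of the momentum relation is only needed over the slit bundle where $(y^i)\neq 0$ --- is the correct reading.
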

\begin{proof} Since
\[
\frac{\partial^2 F}{\partial (y^0)^2} = \frac{1}{(y^0)^3} \left(\spd{L}{{v}^i}{{v}^j} \circ \pi \right) y^iy^j,
\]
the first statement follows. For the second, we get for $(w^\alpha) =(w^0,w^i)$ that
\[
\spd{F}{y^\alpha}{y^\beta} w^\alpha w^\beta = \frac{1}{y^0}\left(\spd{L}{v^i}{v^j}\circ\pi\right) \left( w^i - \frac{y^i}{y^0}w^0\right)   \left( w^j - \frac{y^j}{y^0}w^0\right) \geq 0,
\]
with equality if $w^i = (w^0/y^0) y^i$. Since always $w^0 = (w^0/y^0) y^0$, we can conclude from this that indeed $w^\alpha = \lambda y^\alpha$, with $\lambda=w^0/y^0$.
\end{proof}

We will assume from now on that $L$ is strongly convex. Remark that the momentum relation (\ref{homenergy}) in $T^\circ\!(\R\times TQ)$ is here of the form $e = E_L(x^i, y^i/y^0)$. The sought (local) function $\iota^0_e(x^i,y^i) > 0$ satisfies therefore the identity
\begin{equation} \label{homenergy2}
e = E_L(x^i, y^i/\iota^0_e(x^i,y^i)).
\end{equation}
From Proposition~\ref{prop1} we may conclude that $\iota_e$ is always a 1-homogeneous function, and so is also the Routhian of $F$, which in this case becomes
\begin{equation} \label{Fe}
F_e(x^i,y^i) = (F - \fpd{F}{y^0}y^0 )|_{y^0=\iota^0_e} = \iota^0_e(x^i,y^i)\left(L\left(x^i,\frac{y^i}{\iota^0_e(x^i,y^i)}\right) + e\right).
\end{equation}
From Propositions~\ref{propsix} and \ref{propfive} we know that the Hessian of $F_e$ is also positive quasi-definite, and that $F_e$ can be suitably changed into a local Finsler function (in view of Proposition~\ref{multprop}). In the section on examples, we will discuss some cases when $F_e$ defines a global Finsler function, or when it can be changed into one. We will use the notation $E_e = \onehalf F_e^2$ for its energy function.

We know from Proposition~\ref{propthree} that the restriction of a Lagrangian vector field  $\Gamma_F$ on $\R\times TQ$ to the level set where the momentum $\partial F/\partial y^0$ is $-e$ is a Lagrangian vector field $\Gamma_{F_e}$ on $TQ$, for the Routhian $F_e$. We have that  $\Gamma_F\circ\iota_e = T\iota_e\circ\Gamma_{F_e}$. Since $F_e$ is locally a Finsler function (after possibly adding a total time derivative) we know that there exists a whole projective class of sprays, which all have the same geodesics as points sets. Picking one out of the class, is equivalent with choosing a parametrization for the geodesics. If we take the canonical spray $\Gamma_{E_e}$, the parametrization is given by arc length. Another choice is the following. Consider again the unique Lagrangian spray of $F$ that is tangent to $y_0=1$, $\Gamma_{F,y_0}$. Its restriction to $\partial F/\partial y_0=-e$ defines, by Proposition~\ref{propthree} and \ref{proptwo} a specific Lagrangian spray of the projective class of $F_e$. Let's call it ${\bar\Gamma}$ for now. Since this vector field is $\iota_e$-related to $\Gamma_{F_e,y^0}$, we have that ${\bar\Gamma}(\iota_e^0) = {\bar\Gamma}(y_0\circ \iota_e) = \Gamma_{F,y^0}(y^0)=0$. We can therefore conclude that $\bar\Gamma = \Gamma_{F_e,\, \iota_e^0}$, the unique vector field in the projective class of $F_e$ that is tangent to $\iota_e^0=1$. Remark that the relation $\iota_e^0(x,y)=1$ is equivalent with $E_L(x,y) =e$, in view of relation (\ref{homenergy2}). Given that, according to Proposition~\ref{propfour},  $\Gamma_\L$ was the restriction of the same vector field $\Gamma_{F,y^0}$ to $y^0=1$, we get, after discarding the term $\partial/\partial t$ in $\Gamma_\L$, that $\Gamma_L \mid_{E_L=e} = \Gamma_{F_e,\,\iota_e^0} \mid_{\iota_e^0=1}$ as vector fields on $TQ$. In conclusion, we can say that:

\begin{thm}\label{mainthm} Let $L$ be a strongly convex Lagrangian on $TQ$.

\begin{enumerate} \item The restriction of its Lagrangian vector field $\Gamma_L$ to the energy level set where $E_L=e$ coincides with the restriction to $\iota_e^0=1$ of the unique spray that is both projectively equivalent to the canonical spray $\Gamma_{E_e}$ of the function $F_e$ in (\ref{Fe}) and tangent to $\iota^0_e=1$.

 \item The solutions of the Euler-Lagrange equations of $L$ with energy $E_L=e$ are geodesics of a local Finsler function $F_e$ (possibly after the addition of a total time derivative, possibly after a reparametrization).
  \end{enumerate}
\end{thm}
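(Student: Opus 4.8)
The plan is to assemble both halves of the statement from the reduction machinery already in place, treating $L$ throughout via its homogenization $F$ in~(\ref{defF}). First I would record the two conservation laws that drive everything: the cyclic coordinate $x^0$ of $F$ produces the momentum $\partial F/\partial y^0=-e$, and by relation~(\ref{homenergy2}) the level set $\partial F/\partial y^0=-e$ is exactly the image of $\iota_e$, with the slice $\iota_e^0=1$ pulling back to the energy constraint $E_L=e$. By Proposition~\ref{propfive} the function $F$ is $\R$-regular with positive quasi-definite Hessian, so $E=\onehalf F^2$ is a regular Lagrangian and its canonical spray $\Gamma_E$ exists; this is what makes the tangent-to-$y^0=1$ member $\Gamma_{F,y^0}=\Gamma_E+P\Delta$ of the projective class available as a single distinguished spray.

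For part~1 I would then run $\Gamma_{F,y^0}$ through both viewpoints at once. On the one hand, Proposition~\ref{propfour} identifies its restriction to $y^0=1$ with the time-dependent field $\Gamma_\L=\partial/\partial t+\Gamma_L$. On the other hand, Proposition~\ref{propthree} says its restriction $\bar\Gamma$ to the momentum level is a Lagrangian vector field for the Routhian $F_e$ of~(\ref{Fe}), and Proposition~\ref{proptwo} upgrades $\bar\Gamma$ to a spray because $F$ is 1-homogeneous and $\R$-regular. The decisive computation is $\bar\Gamma(\iota_e^0)=\bar\Gamma(y^0\circ\iota_e)=\Gamma_{F,y^0}(y^0)=0$, using that $\bar\Gamma$ is $\iota_e$-related to $\Gamma_{F,y^0}$ and that $y^0$ is a first integral of $\Gamma_{F,y^0}$ by the choice of $P$; this pins down $\bar\Gamma$ as $\Gamma_{F_e,\iota_e^0}$, the unique member of the projective class of $F_e$ tangent to $\iota_e^0=1$. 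Since Proposition~\ref{propsix} makes the Hessian of $F_e$ positive quasi-definite, that class coincides with the projective class of the canonical spray $\Gamma_{E_e}$ of $E_e=\onehalf F_e^2$, which is the characterization asked for. Evaluating the two descriptions on the common set $\iota_e^0=1\Leftrightarrow E_L=e$ and discarding $\partial/\partial t$ then gives $\Gamma_L\mid_{E_L=e}=\Gamma_{F_e,\iota_e^0}\mid_{\iota_e^0=1}$.

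For part~2 I would combine Propositions~\ref{prop1} and~\ref{propsix}: the Routhian $F_e$ is 1-homogeneous with positive quasi-definite Hessian, so Proposition~\ref{multprop} corrects it by a total time derivative into a genuine local Finsler function $\tilde F_e$ whose geodesics as point sets coincide with those of $F_e$. By part~1 the base integral curves of $\Gamma_L$ at energy $e$ are the base integral curves of a spray in the projective class of $\Gamma_{E_e}$, hence geodesics of $F_e$ up to an orientation-preserving reparametrization, and therefore geodesics of $\tilde F_e$.

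I expect the main obstacle to be the bookkeeping in part~1 that forces one spray to play both roles. One must check that the tangent-to-$y^0=1$ condition on $\Gamma_{F,y^0}$ survives Routh reduction precisely as the tangent-to-$\iota_e^0=1$ condition on $\bar\Gamma$, and that the two a priori different restrictions---to $y^0=1$ and to the momentum level---are compatible enough that, once the homogenized time direction $\partial/\partial t$ is stripped away, they agree as vector fields on $TQ$ over the shared constraint set. The homogeneity and quasi-definiteness inputs are routine once the relevant propositions are invoked; it is this simultaneous-restriction identity that carries the real content.
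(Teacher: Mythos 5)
Your proposal is correct and takes essentially the same route as the paper: the argument preceding Theorem~\ref{mainthm} likewise runs the single spray $\Gamma_{F,y^0}$ through Propositions~\ref{propfour}, \ref{propthree} and \ref{proptwo}, and pins down the reduced spray via exactly your key computation $\bar\Gamma(\iota_e^0)=\bar\Gamma(y^0\circ\iota_e)=\Gamma_{F,y^0}(y^0)=0$, with Propositions~\ref{prop1}, \ref{propsix}, \ref{propfive} and \ref{multprop} supplying the Finsler structure for part~2. The ``simultaneous-restriction'' compatibility you flag as the main obstacle is handled in the paper just as you suggest, by observing that $\iota_e^0=1$ is equivalent to $E_L=e$ through relation~(\ref{homenergy2}) and then discarding $\partial/\partial t$.
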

In the statement of the theorem, we have used the word `geodesic' in the sense of a set of points. The addition `possibly after a reparametrization' refers to the fact that one may use either the canonical spray of $F_e$, or any other spray (like $\Gamma_{F_e,\,\iota^0_e}$) in its projective class to compute those geodesics as parametrized curves.

\section{Examples} \label{secfive}

{\bf Example 1.} Consider first a positive, strongly convex and $k$-homogeneous Lagrangian $L$ on $Q$, $\Delta_Q(L) = pL$, where $k\geq 2$. The defining relation (\ref{homenergy2}) for $\iota_e^0$ is then $(k-1)L(x,y/\iota^0_e) =e$. It has $\iota^0_e = ((k-1) L / e)^{1/k}$ as a solution, when $e> 0$. With this, the corresponding 1-homogeneous function is
\[
F_e = k\left( \frac{k-1}{e} \right)^{\frac{1-k}{k}} L^{\frac{1}{k}}.
\]
Since here $E_L = (k-1)L$, we get that $\Gamma_L(L) = 0$. One can verify that the Lagrangian field $\Gamma_L$ of a $k$-homogeneous Lagrangian is in fact a spray, see e.g.\ \cite{LR}. Since $L$ is also proportional with a constant factor to $(\iota_e^0)^k$, and since we assume that $\Gamma_{F_e}$ is such that $\Gamma_{F_e,\,\iota^0_e}(\iota^0_e)=0$, the sprays $\Gamma_L$ and $\Gamma_{F_e,\,\iota^0_e}$ will coincide. This is, in particular, the case when $k=2$ and $L$ is the kinetic energy associated to a Riemannian metric.

{\bf Example 2.} We will call a Lagrangian {\em magnetic} if it is of the type
\[
L(x,v)=\onehalf g_{ij}(x)v^iv^j+\beta_i(x)v^i - V(x),
\]
where $g$ is a Riemannian metric on $Q$, $\beta$ is a 1-form on $Q$ and $V$ is a smooth function on $Q$. The Lagrangian $L$ is always strongly convex, since its Hessian is given by the Riemannian metric. Under the assumption that $V(x)$ is bounded from above and $e> \max_{x\in Q} V(x)$, one easily finds that
\[
\iota^0_e(x,y) = \sqrt{\frac{g_{ij}(x)y^iy^j}{2(e-V(x))}}
\]
is the positive solution of the momentum relation (\ref{homenergy2}).  With that the function $F_e$ is of so-called Randers type,
\begin{equation} \label{Randers}
F_e(x,y)=\sqrt{{\tilde g}_{ij}(x)y^iy^j}+{\beta}_i(x)y^i,
\end{equation}
where ${\tilde g}_{ij} = 2(e-V)g_{ij}$ is the {\em Jacobi metric} we mentioned in the Introduction.
Obviously, when $\beta_i=0$, we recover the statement of Theorem~\ref{jacobi}, when we consider the geodesics as being given by the integral curves of the canonical spray $\Gamma_{E_e}$, where $E_e =\onehalf F_e^2 =\onehalf {\tilde g}_{ij}y^iy^j$.

The case where $V=0$ can be found in \cite{Yasuda}, but the method used to obtain the result is different from ours. Remark that in two- or three-dimensional
Euclidean space this is the Lagrangian for the motion of a classical charged particle, of unit charge, in the magnetic field determined by
$d(\beta_idx^i)$.

Let us consider again the general case. From the statement in Theorem~\ref{mainthm} we know that strong convexity only guarantees the existence of a local Finsler function. For this example it is possible to give a criterion for when the Randers function (\ref{Randers}) is actually a global Finsler function.  Let $\beta^\#$ be the vector field which is such that $g(\beta^\#,X) = \beta(X)$ for all $X\in\vectorfields{Q}$. It is well-known that a Randers function determines a global Finsler function in the region determined by those $x\in Q$ for which $\tilde g(\beta^\#,\beta^\#)<1$  (see e.g.\ \cite{BCS}). One easily verifies that $F_e$ will be a global Finsler function if and only if $e> \onehalf g^{ij}(x)\beta_i(x)\beta_j(x) + V$ for all $x\in Q$, or if and only if
\[
e> \max_{x\in Q}\left(\onehalf g^{ij}\beta_i\beta_j + V\right).
\]
We can actually push this limit even more down. Recall from Proposition~\ref{multprop} that there remains some freedom that we have not exploited so far. By adding a total time-derivative (i.e.\ by replacing the one-form $\beta$ by $\beta-df$ for some function $f$) we do not change the set of geodesics. With that in mind, the above criterion can be most easily rewritten by invoking the  Hamiltonian function of $L$, i.e.\ the function on $T^*Q$ given by $H(x,p)=\onehalf g^{ij}(x)(\beta_i(x)-p_i)(\beta_j(x)-p_j) + V(x)$. If the number
\[
c(L) = \inf_{f\in\cinfty{Q}} \max_{x\in Q} H (x, df)
\]
exists, we may conclude that the function $F_e + (\partial f/\partial x^i)y^i$ (with + sign in $F_e$) is a global Finsler function, for each energy level $e$ such that $e> c(L)$. The number $c(L)$ coincides, at least for the case of a magnetic Lagrangian, with what is called  Ma\~n\'e's critical value in \cite{Gonzalez}.

{\bf Example 3.} Let us consider the following specific two-dimensional Lagrangian of magnetic type,
\[
L(x,v)= \frac{v_1^2+v_2^2}{16\left(1-x_1^2-x_2^2 \right)^2}  +  \frac{x_2v_1-x_1v_2}{2( 1-x_1^2-x_2^2)  }.
 \]
This Lagrangian is strongly convex only in the area $Q= \{ 1-x_1^2-x_2^2>0 \}$, i.e.\ in the open unit disk. If we assume that $e>0$ the function $\iota^0_e$ satisfies
\[
(\iota^0_e(x,y))^2 = \frac{ y_1^2+y_2^2 }{16e(1-x_1^2-x_2^2)^2}.
\]
If we introduce a new parameter $\tau = 1/ \sqrt{e}$, the corresponding homogeneous function (\ref{Fe}) is, up to a constant factor,
\begin{equation} \label{Ftau}
F_\tau(x,y)= \frac{\sqrt{y_1^2+y_2^2}+ \tau(x_2y_1-x_1y_2)}{2(1-x_1^2-x_2^2)}.
\end{equation}

Different values of the parameter $\tau$ correspond with different energy levels for the original Lagrangian $L$. The above (parametrized) Randers function $F_\tau$ has also appeared in our paper \cite{horo}, where it was shown that all of its geodesics are circles (straight lines are considered as circles of infinite radius). If we set $\tau=0$ we get the (Finsler function associated to the) Riemannian metric of the hyperbolic plane (in its version as the Poincar\'e disk). Its geodesics are circles that cross the border of the unit disk perpendicularly. If we set $\tau=1$ we get the so-called horocycles as the Finslerian geodesics (actually, also when we would take $\tau=-1$ if that were allowed in the current setting). These are circles that are tangent to the unit circle at the origin. The configurations for other values of $\tau$ interpolate between, and extend beyond, these.

In each of the two figures below, we have chosen a fixed initial position and a fixed initial tangent direction. We have plotted for a few values of $\tau\geq 0$ the two geodesics of $F_\tau$ corresponding to each of the two orientations of the initial tangent line. The black circles represent the special cases of a hyperbolic geodesic or the horocycles (one for each orientation). The Randers function (\ref{Ftau}) will be a Finsler function if $F_\tau>0$, or $x_1^2+x_2^2<\min(1,e=\tau^{-2})$. Thus for $0\leq\tau\leq 1$, $F_\tau$ is a globally defined Finsler metric over the whole of the open unit disk. Its geodesics are the grey circles outside the horocycles (to be precise: the arcs of those circles that lie within the open unit disk). But for $\tau>1$ the function $F_\tau$ is a Finsler function only on $x^2+y^2<e$. Its geodesics are given by grey circles inside the horocycles.

\begin{figure}[h]
\hspace*{2cm}\begin{tabular}{cc}
\begin{minipage}{7cm}
\includegraphics[scale=0.5]{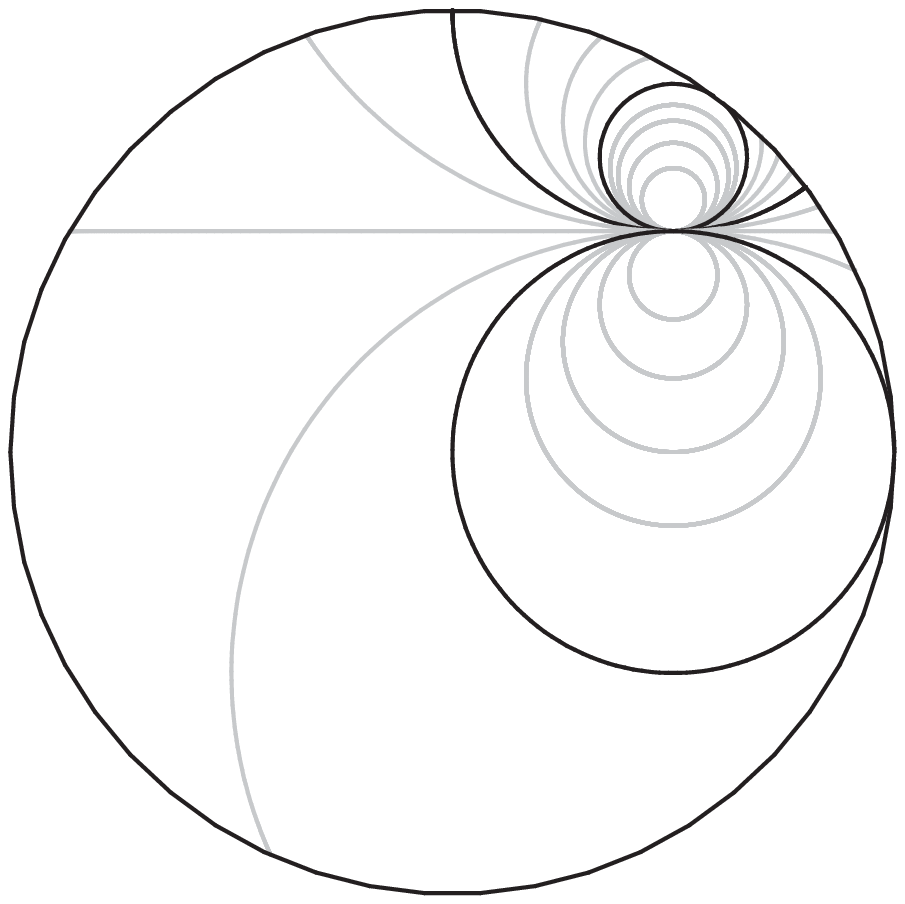}
\end{minipage}
&
\begin{minipage}{7cm}
\includegraphics[scale=0.5]{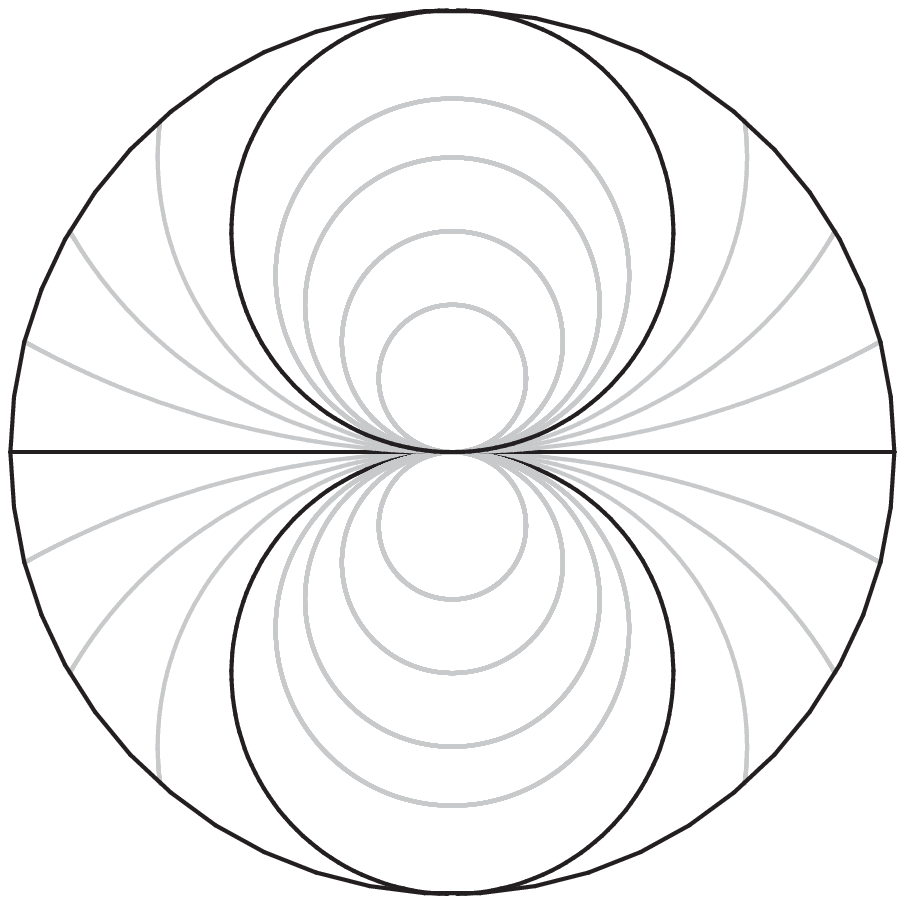}
\end{minipage}
\end{tabular}
\end{figure}

\section{Outlook}\label{secsix}

The last example was motivated in \cite{horo} by a study of the so-called {\em Finsler projective metrizability problem}, see e.g.\ \cite{Buca,multiplier,Szilasibook}. This is, roughly speaking, the question whether or not the unparametrized base integral curves of a given geodesic class of sprays are the geodesics of a yet to be determined Finsler function. It would be of interest to see whether Theorem~\ref{mainthm} can provide some help in finding such a Finsler function, or some new insight. An indication that this is likely going to be the case, may be derived from the following observation. It is well-known that here exists a canonical torsion-free invariant linear connection on each Lie group, given by $\nabla_X Y = \onehalf [X,Y]$ for left-invariant vector fields $X,Y$. Its canonical spray determines a projective class of sprays on each Lie group, in a natural way. In a series of papers, starting with \cite{Thompson}, G.\ Thompson and co-workers have studied the question whether there exists a regular Lagrangian such that its Euler-Lagrange equation are equivalent with the geodesic equations of the connection. Since the Lagrangian field of a connection is a spray, Theorem~\ref{mainthm} will provide a (local) Finsler function for each of these Lagrangians, which will give a solution for the metrizability problem of the canonical connection.

Much of the method we have developed in this paper relied on the fact that the time variable $t$ generates, for an autonomous Lagrangian $L$, a cyclic variable $x^0$ for the corresponding 1-homogeneous function $F$. The conserved momentum we have used in Routh's procedure was related to conservation of energy. We may translate this to the more general context of Noether symmetries. It is well-known energy conservation is a consequence of the presence of the Noether symmetry $\partial/\partial t$ for each autonomous Lagrangian. There exist, however, autonomous Lagrangian systems with more general time-dependent Noether symmetries, not necessarily of the type of a cyclic coordinate. Just to give one example, consider the Lagrangian given by $L=\onehalf (v_1^2 + v_2^2)$ and its symmetry given by
\[
2t\fpd{}{t} + x_1 \fpd{}{x_1} + x_2 \fpd{}{x_2}.
\]
The corresponding conservation law is $(v_1^2 + v_2^2)t + x_1v_1+x_2v_2=\mu$. One may show that any time-dependent Noether symmetry of a possibly time-dependent Lagrangian $\L$ defines an autonomous Noether symmetry of the function $F$ in (\ref{defF}). Moreover, as we have stated before, also Routh's reduction method remains valid in a more general context.  It would be of interest to investigate how much of our method can be generalized to include also these types of Noether symmetries and their conservation laws.

{\bf Acknowledgements.} This work is part of the {\sc irses} project {\sc geomech} (nr. 246981) within the 7th European Community Framework Programme.


\begin{thebibliography}{99}

\bibitem{AM} R.\ Abraham and J.E.\ Marsden, Foundations of mechanics,  Benjamin/Cummings Publishing Co.\ (1978).


\bibitem{Arnold} V.I.\ Arnold, Mathematical methods of classical mechanics, Springer (1978).


\bibitem{BCS}
D.\ Bao, S.-S.\ Chern and Z.\ Shen, An introduction to Riemann-Finsler geometry, Springer (2000).

\bibitem{Buca} I.\ Bucataru and Z.\ Muzsnay, Projective metrizability and formal integrability, SIGMA 7 (2011) 114.

\bibitem{Cheng} W.\ Cheng, Generalized Maupertuis' principle with applications,
Acta Math.\ Sin.\ (Engl.\ Ser.) 28 (2012) 2153--2160. 

\bibitem{SV} M.\ Crampin, The second variation formula in Lagrange and Finsler geometry, Houston J.\ Math.\ 26 (2000) 255--276.

\bibitem{CM} M.\ Crampin and T.\ Mestdag, Routh's procedure for non-Abelian symmetry groups, J.\ Math.\ Phys.\ 49 (2008) 032901 (28p).

\bibitem{horo} M.\ Crampin and T.\ Mestdag, A class of Finsler surfaces whose geodesics are circles, Publ.\ Math.\ (Debrecen) 84 (1-2) (2014) 3--16.

\bibitem{multiplier} M.\ Crampin, T.\ Mestdag and D.J.\ Saunders, The multiplier approach to the projective Finsler metrizability problem, Diff.\ Geom.\ Appl.\ 30 (2012) 604--621.

\bibitem{Contreras} G.\ Contreras, R.\ Iturriaga, G.P.\ Paternain and M.\ Paternain, Lagrangian graphs, minimizing measures and Ma\~n\'e's critical values, Geom.\ Funct.\ Anal.\ 8 (1998) 788--809.

\bibitem{LR} M.\ de Leon and P.R.\ Rodrigues, Methods of differential geometry in analytical mechanics, North-Holland Publishing Co. (1989).

\bibitem{Gonzalez} M.A.\ Gonzalez Leon and J.L.\ Hernandez Pastora, On the Jacobi-Metric stability criterion, In: Proceedings of the XV International Workshop on Geometry and Physics, Puerto de la Cruz, Tenerife, Spain (2006).

\bibitem{It} R.\ Iturriaga and H.\ S\'anchez-Morgado, Finsler metrics and action potentials,
Proc.\ Amer.\ Math.\ Soc.\ 128 (2000)  3311--3316. 

\bibitem{Koiller} J.\ Koiller, Reduction of some classical non-holonomic systems with symmetry, Arch.\ Rational Mech.\ Anal.\ 118 (1992) 113--148.

\bibitem{Edu} B.\ Langerock, E.\ Garc\'ia-Tora\~no Andr\'es and F.\ Cantrijn, Routh reduction and the class of magnetic Lagrangian systems, J.\ Math.\ Phys. 53 (2012) 062902.


\bibitem{Scheurle} J.E.\ Marsden, T.\ Ratiu and J.\ Scheurle, Reduction theory and the Lagrange-Routh equations, J.\ Math.\ Phys.\ 41 (2000) 3379--3429.

\bibitem{Thompson} G.\ Thompson, Variational connections on Lie groups, Diff.\ Geom.\ Appl.\ 18 (2003)  255--270.

\bibitem{Routh} E.J.\ Routh, A treatise on the stability of a given state of motion, MacMillan (1877), available on google.books.com.

\bibitem{Szilasibook} J.\ Szilasi, R.L.\ Lovas and D.Cs.\ Kert\'esz, Connections, sprays and Finsler structures, World Scientific (2014).

\bibitem{Yasuda} H.\ Yasuda, On Finsler geometry and analytical dynamics. Tensor (N.S.) 35 (1981) 63--72.

\end{thebibliography}
\end{document}